 \def\legendre@dash#1#2{\hb@xt@#1{%
 		\kern-#2\p@
 		\cleaders\hbox{\kern.5\p@
 			\vrule\@height.2\p@\@depth.2\p@\@width\p@
 			\kern.5\p@}\hfil
 		\kern-#2\p@
 }}
 \def\@legendre#1#2#3#4#5{\mathopen{}\left(
 	\sbox\z@{$\genfrac{}{}{0pt}{#1}{#3#4}{#3#5}$}%
 	\dimen@=\wd\z@
 	\kern-\p@\vcenter{\box0}\kern-\dimen@\vcenter{\legendre@dash\dimen@{#2}}\kern-\p@
 	\right)\mathclose{}}
 \newcommand\legendre[2]{\mathchoice
 	{\@legendre{0}{1}{}{#1}{#2}}
 	{\@legendre{1}{.5}{\vphantom{1}}{#1}{#2}}
 	{\@legendre{2}{0}{\vphantom{1}}{#1}{#2}}
 	{\@legendre{3}{0}{\vphantom{1}}{#1}{#2}}
 }
 \def\dlegendre{\@legendre{0}{1}{}}
 \def\tlegendre{\@legendre{1}{0.5}{\vphantom{1}}}
 \newtheorem{Lemma}{Lemma}
 \newtheorem{Theorem}{Theorem}
 \newtheorem{corollary}{Corollary}
 \newtheorem{Remark}{Remark}
 \def\QQ{\mathbb{Q}}
 \def\ZZ{\mathds{Z}}
 \subjclass[2010]{ 11R29; 11R23;   11R18; 11R20.}
 \keywords{Cyclotomic $Z_p$-extension, $2$-rank; $2$-class group.}
\begin{document}
 	
 	\title[$2$-class groups of cyclotomic towers]{$2$-class groups of cyclotomic towers of imaginary biquadratic fields and applications}
 	\author[M. M. Chems-Eddin]{	MOHAMED MAHMOUD CHEMS-EDDIN}
 	\address{Mohamed Mahmoud Chems-Eddin: Mohammed First University, Mathematics Department, Sciences Faculty, Oujda, Morocco }
 	\email{2m.chemseddin@gmail.com}
 	
 	\author[K. Müller]{KATHARINA M\"ULLER}
 	\address{Katharina Müller: Georg-August-Universität, Göttingen, Germany}
 	\email{katharina.mueller@mathematik.uni-goettingen.de}

 	\maketitle
 	\begin{abstract}
 		Let $d$ be an odd positive square-free integer. In this paper we shall investigate the structure of the $2$-class group of the cyclotomic $\mathbb{Z}_2$-extension of the   imaginary biquadratic number field $\mathbb{Q}(\sqrt{d},\sqrt{-1})$ if $d$ is of specifiic form. Furthermore, we deduce the structure of the $2$-class group of the cyclotomic $\mathbb{Z}_2$-extension of $\mathbb{Q}(\sqrt{-d})$.
 	\end{abstract}
 	\section{Introduction} 
 	Let $p$ be a prime number and  $k$ be a   number field. 
 	Denote by 	$k_\infty$ the cyclotomic $\mathbb Z_p$-extension of $k$.
 	The field $k_\infty$ contains a unique cyclic subfield $k_n$  of degree $p^n$ over $k$. The field $k_n$ is called the $n$-th	layer of the $\mathbb Z_p$-extension of $k$. 
 	In $1959$, the study of      $p$-class numbers of number fields with large degree led to  a spectacular result due to Iwasawa, that we shall recall here and use later (for $p=2$).
 	Denote by $e_n$ the highest power of $p$ dividing the class number of $k_n$. 
 	Then there exist integers $\lambda$,  $\mu\geq 0$ and  $\nu$, all independent of $n$, and  an integer $n_0$ such that:
 	\begin{eqnarray}\label{iwasawa}e_n=\lambda n+\mu p^n+\nu,\end{eqnarray}
 	for all $n\geq n_0$. The integers $\lambda$,  $\mu\geq 0$ and  $\nu$ are called the Iwasawa invariants of $k_\infty$(cf. \cite{iwasawa59}).
 	

 	Thereafter, the study of cyclotomic  $\mathbb{Z}_2$-extensions of $\mathrm{CM}$-Fields 
 	was the subject of many   papers and is still of huge interest in algebraic number theory. Let us mention a few papers related to the subject of this paper. In 1980, Kida studied   the   Iwasawa  $\lambda^-$-invariants  and  the  2-ranks  of  the 
 	narrow ideal  class  groups  in  the  2-extensions  of  $\mathrm{CM}$-fields (cf. \cite{kida}). In  $2018$,  Atsuta (cf. \cite{atsuta}) studied the maximal finite submodule of the minus part of the Iwasawa module attached to $k_{\infty}$, while the second named author worked on the capitulation in the minus-part   in the steps of the cyclotomic $\ZZ_p$-extension  of a $\mathrm{CM}$-field $k$ (cf. \cite{Katharina}).
 	
 	In this paper we will concentrate on $\mathrm{CM}$-fields of the following form: Let $n\geq 0$ be a natural number, $d$ be an odd positive square-free integer and $L_{n,d}:=\mathbb{Q}(\zeta_{2^{n+2}}, \sqrt{d})$. In $2019$,  the first named author, Azizi  and Zekhnini, computed the rank of the $2$-class group of $L_{n,d}$, the layers of the  $\mathbb{Z}_2$-extension of some special Dirichlet fields of the form $L_{0,d}=\mathbb{Q}(\sqrt{d},\sqrt{-1})$ (cf. \cite{ACZ,chemsZkhnin2,chems}). Li, Ouyang, Xu and Zhang computed the $2$-class groups of these fields for $d$ being a prime congruent to $3\pmod 8$, $5\pmod 8$ and $7\pmod {16}$ (cf. \cite{LiouYangXuZhang}).
 	
 	In the present work we consider some different infinite families    of biquadratic   fields $L_{0,d}$ and determine the structure of the $2$-class group of the $n$-th layer of their cyclotomic $\ZZ_2$-extensions. Let $h_2(d)$ denote the $2$-class number of the quadratic field $\mathbb{Q}(\sqrt{d})$. The main aim of this paper is to prove the following Theorem using  some new techniques based on Iwasawa theory.
 	\begin{Theorem}Let $d$ be a positive  square-free integer and $n\ge 1$ be an integer.  
 		\begin{enumerate}[\rm 1.]
 			\item Assume $d$ has one of the following forms: 
 			\begin{enumerate}[\rm $\bullet$]
 				\item 	$d=p$, for a prime  $p\equiv 9\pmod{16}$ such that $(\frac{2}{p})_4=1$,
 				\item    $d=pq$, for two primes $p\equiv q\equiv 3 \pmod{8}$.
 			\end{enumerate}
 			Then the $2$-class group of $L_{n,d}$ is isomorphic to $\ZZ/2^{n+r-2}\ZZ\times\ZZ/2\ZZ$, for a constant $r$   such that  $2^r=h_2(-2d)$.
 			\item Let $d=pq$, for  two primes $p$ and $q$ such that $p\equiv-q\equiv 5  \pmod 8$. Then the $2$-class group of $L_{n,d}$ is isomorphic to $\ZZ/2^{n+r-2}\ZZ$, for a constant $r$  such that and $2^r=2\cdot h_2(-pq)$. Further, Greenberg's Conjecture holds for the field $L_{n,d}^+$, i.e., the $2$-class number   of $L_{n,d}^+$ is uniformly bounded.
 		\end{enumerate}	
 	\end{Theorem}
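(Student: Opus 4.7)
The plan is to combine genus theory at the base field $L_{0,d}=\mathbb{Q}(\sqrt d,i)$ with Iwasawa theoretic propagation along the cyclotomic $\mathbb{Z}_2$-tower $(L_{n,d})_{n\ge 0}$. Write $A_n$ for the $2$-Sylow subgroup of the class group of $L_{n,d}$. Since $L_{n,d}$ is CM, I shall track separately the plus and minus parts: all of the linear growth predicted by (\ref{iwasawa}) should concentrate in $A_n^-$, while $A_n^+$ is expected to remain bounded (this is where Greenberg's Conjecture enters in case 2).

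\textbf{Ground field and propagation.} First, I would determine the structure of $A_0$ from the classical class-number formula for biquadratic CM fields, which expresses $h_2(L_{0,d})$ in terms of the $2$-class numbers of the three quadratic subfields $\mathbb{Q}(\sqrt d)$, $\mathbb{Q}(i)$, $\mathbb{Q}(\sqrt{-d})$ together with a unit index. Under the stated congruences on $d$ this simplifies, relating $h_2(L_{0,d})$ to $h_2(-2d)$ in case 1 and to $h_2(-pq)$ in case 2. Second, I would apply Chevalley's ambiguous class number formula to each extension $L_{n,d}/L_{0,d}$: only primes above $2$ ramify, and the congruences on $d$ prescribe their decomposition in $\mathbb{Q}(\sqrt d,i)$ exactly, so the ramification term can be written down explicitly. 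The unit index is controlled by showing which fundamental units of $L_{0,d}$ become local $2$-adic norms at each layer. Combined, these computations should yield $|A_n|=2^{n+r-1}$ in case 1 and $|A_n|=2^{n+r-2}$ in case 2, with the $2$-rank of $A_n$ stabilizing at $2$ and $1$ respectively.

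\textbf{Cyclic structure and Greenberg.} Once the order and rank of $A_n$ are known, the cyclic decomposition in the statement follows formally, since a finite $2$-group of given order and rank carrying a cyclic $\mathbb{Z}_2$-action with $\lambda=1$, $\mu=0$ must split as $\mathbb{Z}/2^{n+r-2}\mathbb{Z}\times\mathbb{Z}/2\mathbb{Z}$ or as $\mathbb{Z}/2^{n+r-2}\mathbb{Z}$ according to whether the rank is $2$ or $1$. For Greenberg's Conjecture in case 2, I would invoke a Fukuda-type stability criterion (if the norm map $A_{n+1}^+\to A_n^+$ is surjective at some layer beyond which the $2$-rank of $A_n^+$ is stable, then $\lambda^+=\mu^+=0$), coupled with a reflection-style inequality comparing the $2$-ranks of $A_n^+$ and $A_n^-$; the congruence $p\equiv-q\equiv 5\pmod 8$ forces the bound to be tight enough to conclude that $A_n^+$ is trivial (or at worst uniformly bounded). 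The main obstacle I anticipate is the unit bookkeeping in the genus-formula step: deciding the local-norm status of a fundamental system of units of $L_{0,d}$ along the entire tower is delicate, and this is precisely where the distinct congruence hypotheses of the theorem produce the two different structural conclusions.
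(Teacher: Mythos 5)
Your overall architecture---separating plus and minus parts, getting the rank from genus theory, and pinning the order down by an explicit computation at a low layer---is broadly in the spirit of the paper, but the central step as you describe it does not work. Chevalley's ambiguous class number formula applied to $L_{n,d}/L_{0,d}$ computes only the order of the subgroup of $A_n$ fixed by $\mathrm{Gal}(L_{n,d}/L_{0,d})$, never $\vert A_n\vert$ itself, so the assertion that these computations ``yield $\vert A_n\vert=2^{n+r-1}$'' is unsupported, and no amount of unit bookkeeping repairs it. The paper obtains $\vert A_n\vert$ quite differently: it first proves $\lambda^-(L_{0,d})=1$ via Kida's formula (Theorem \ref{thm 3})---a tool for which your proposal offers no substitute and which is the only way the paper ever gets its hands on $\lambda$---then shows $h_2(L_{n,d}^+)$ is odd (Theorem \ref{thm 4}) or bounded, so that $\lambda=\lambda^-=1$ and $\mu=0$, and finally uses the $\Lambda$-module structure theorem ($A_n\cong A_\infty/\nu_{n,0}Y$, the elementary module $E$, and the identification of the maximal finite submodule of $A_\infty$ with the order-$2$ group generated by the classes of the ramified primes above $2$) to prove that Iwasawa's formula $\vert A_n\vert=2^{n+\nu}$ holds for \emph{every} $n\ge 1$, not merely for large $n$. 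The constant is then anchored by computing $h_2(L_{1,d})$---not $h_2(L_{0,d})$---via Wada's class number formula for the triquadratic field $L_{1,d}=\QQ(\sqrt2,i,\sqrt d)$ together with a unit-index computation (Lemma \ref{lemma}); anchoring at the base layer would not suffice, since the formula is only established from $n=1$ on.

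Two further steps are not ``formal'' as you claim. Knowing $\vert A_n\vert=2^{n+r-1}$ and that the $2$-rank is $2$ does not, for a fixed $n$, force the splitting $\ZZ/2\ZZ\times\ZZ/2^{n+r-2}\ZZ$: the paper needs that $A_n^-$ has rank one (no capitulation in the minus part plus $\lambda^-=1$) and that the remaining generator is the class of a prime ramified in $L_{n,d}/L_{n,d}^+$, which has order exactly $2$ precisely because $h_2(L_{n,d}^+)$ is odd. Likewise, Greenberg's conjecture in case 2 is not reached by a Fukuda-type stability criterion or reflection; the paper argues directly that cyclicity of $A_n$ (Theorem \ref{cyclic}) together with $\lambda(\widehat{A^-_\infty})=1$ bounds $A_n^+$ by $2$ elements, and that the capitulation kernel of $A_n(L_{n,d}^+)\to A_n(L_{n,d})$ has at most $2$ elements, whence $h_2(L_{n,d}^+)$ is uniformly bounded. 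Your route might be salvageable, but as written both of these steps, and above all the passage from ambiguous classes to the full order, are placeholders rather than proofs.
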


 	The plan of this paper is the following: In section 2  we will summarize some results on minus parts of $2$-class groups of  $\mathrm{CM}$-fields. In section 3 we collect results on the rank of the $2$-class groups of the fields $L_{n,d}$ and prove some of the main ingredients for the proof of the main Theorem. Section 4 contains   the proof of the main Theorem (cf. Theorems 8 and 9) and finally in   section 5, we apply our main results to give the $2$-class groups of the layers of the cyclotomic $\ZZ_2$-extension of some imaginary quadratic fields. The cyclotomic $\ZZ_2$-extension of imaginary quadratic fields were already investigated by Mizusawa in \cite{mizuzawa}. As applications of our first above result we give a more precise description of the structure of the $2$-class groups of the cyclotomic $\ZZ_2$-extensions for  certain families of imaginary quadratic fields (cf. Theorem \ref{thm 10} and Theorem \ref{thm11}):
 	\begin{Theorem}
 		Let $d$ be a positive square-free integer and $n\ge 1$. Let $K_{0,d}=\QQ(\sqrt{-d})$ and define the field $K_{n,d}$ as the $n$-th layer of the cyclotomic $\ZZ_2$-extension of $K_{0,d}$.
 		\begin{enumerate}[\rm 1.]
 			\item Let $d$ have one of the following forms
 			\begin{enumerate}[\rm $\bullet$]
 				\item $d=pq$ for two primes $p\equiv q\equiv 3\pmod 8$,
 				\item $d=p$ for a prime $p\equiv 9\pmod {16}$ such that $(\frac{2}{p})_4=1$.
 				
 			\end{enumerate} Let $2^r=h_2(-2d)$. Then the $2$-class group of $K_{n,d}$ is isomorphic to $\ZZ/2\ZZ\times \ZZ/2^{n+r-1}\ZZ$.
 			\item Assume that $d=pq$ is the product of two primes $p\equiv -q\equiv 5\pmod 8$ and let $2^r=2\cdot h_2(-pq)$. Then the $2$-class group of $K_{n,d}$ is isomorphic to $\ZZ/2^{n+r-1}\ZZ$.
 		\end{enumerate}
 	\end{Theorem}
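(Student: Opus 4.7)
The plan is to descend from $L_{n,d}$ to $K_{n,d}$ using the quadratic extension $L_{n,d}=K_{n,d}(\sqrt{-1})$ and the structure of $Cl_2(L_{n,d})$ provided by Theorem 1. Since $i\in\QQ(\zeta_{2^{n+2}})\subset L_{n,d}$, we have $\sqrt{-d}=i\sqrt d\in L_{n,d}$, hence $K_{n,d}=K_{0,d}\cdot\QQ(\zeta_{2^{n+2}})^+\subset L_{n,d}$ and $\text{Gal}(L_{n,d}/K_{n,d})=\langle\sigma\rangle$ with $\sigma(i)=-i$. The first key step is the local analysis at $2$, which bifurcates into the two cases. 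In case (1) one has $d\equiv 1\pmod 8$, so $-d\equiv -1$ modulo $(\QQ_2^*)^2$ and therefore $\QQ_2(\sqrt{-d})=\QQ_2(i)$; the unique prime $\mathfrak P$ of $K_{n,d}$ above $2$ is totally ramified over $\QQ$ with completion $\QQ_2(\zeta_{2^{n+2}})$, which already contains $i$, so $\mathfrak P$ splits in $L_{n,d}/K_{n,d}$, and the extension $L_{n,d}/K_{n,d}$ is \emph{everywhere unramified}. In case (2), $d\equiv 7\pmod 8$, so $2$ splits in $K_{0,d}$; each of the two primes of $K_{n,d}$ above $2$ has completion $\QQ_2(\zeta_{2^{n+2}})^+$, which does not contain $i$, so both primes ramify in $L_{n,d}/K_{n,d}$.

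For case (1), class field theory gives $L_{n,d}\subseteq H_{K_{n,d}}$, and the exact sequence
\[
1\to N_{L/K}(Cl_2(L_{n,d}))\to Cl_2(K_{n,d})\to\text{Gal}(L_{n,d}/K_{n,d})\to 1
\]
together with the index $|Cl_2(K_{n,d})/N_{L/K}(Cl_2(L_{n,d}))|=2$ forces $|N_{L/K}(Cl_2(L_{n,d}))|=|Cl_2(L_{n,d})|$, so $N_{L/K}$ is injective on the $2$-part. Hence $Cl_2(K_{n,d})$ is an extension of $\ZZ/2\ZZ$ by $\ZZ/2^{n+r-2}\ZZ\times\ZZ/2\ZZ$ (via Theorem 1); to pin down the isomorphism type I would use a $2$-rank bound $\text{rk}_2\,Cl_2(K_{n,d})\leq 2$ obtained from the genus-theoretic rank computations in Section 3 applied to $K_{n,d}/\QQ$, which rules out the split extension and forces $Cl_2(K_{n,d})\cong\ZZ/2\ZZ\times\ZZ/2^{n+r-1}\ZZ$.

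For case (2), I would apply Chevalley's ambiguous class number formula
\[
|Cl(L_{n,d})^\sigma|=\frac{|Cl(K_{n,d})|\cdot 4}{2\cdot[E_{K_{n,d}}:E_{K_{n,d}}\cap N_{L/K}(L_{n,d}^*)]}
\]
(product of ramification indices equal to $4$). Theorem 1 gives the cyclic structure of $Cl_2(L_{n,d})$ and the uniform boundedness of $h_2(L_{n,d}^+)$ (Greenberg's conjecture, established in Theorem 1); together these determine $|Cl_2(L_{n,d})^\sigma|$ and control the unit index, yielding $|Cl_2(K_{n,d})|=2^{n+r-1}$, and cyclicity follows from a $2$-rank bound. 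The main obstacle will be the explicit control of the unit index $[E_{K_{n,d}}:E_{K_{n,d}}\cap N_{L/K}(L_{n,d}^*)]$ in case (2), which requires studying how cyclotomic units in the tower interact with the norm from $L_{n,d}^*$; a secondary subtlety is the upgrade from cardinality to isomorphism class in case (1), where one must distinguish the target $\ZZ/2\ZZ\times\ZZ/2^{n+r-1}\ZZ$ from the competing $\ZZ/2^{n+r-2}\ZZ\times\ZZ/4\ZZ$, forcing reliance on the $2$-rank machinery developed earlier in the paper.
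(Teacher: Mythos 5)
Your overall strategy is genuinely different from the paper's: you try to descend through the quadratic extension $L_{n,d}/K_{n,d}$ (unramified in case 1, ramified at the two primes above $2$ in case 2, and your local analysis there is correct), whereas the paper works with the $V_4$-extension $L_{n,d}/K_n^+$ and applies Lemmermeyer's class number formula $h_2(L_{n,d})=\frac{Q_{L_{n,d}}}{Q_{K_n}Q_{K_{n,d}}}\cdot\frac{\mu_{L_{n,d}}}{\mu_{K_n}\mu_{K_{n,d}}}\cdot\frac{h_2(K_n)h_2(K_{n,d})h_2(L_{n,d}^+)}{h_2(K_n^+)^2}$, reducing everything to Hasse unit indices $Q$ (all shown to equal $1$) and the already-known values $h_2(K_n)=1$, $h_2(L_{n,d}^+)\in\{1,2\}$, $h_2(L_{n,d})=2^{n+r-1}$. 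That choice is not cosmetic: it is precisely what lets the paper avoid the unit-index and norm-kernel computations on which your sketch founders.

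The central gap is in your case (1): from the exact sequence $1\to N(Cl_2(L_{n,d}))\to Cl_2(K_{n,d})\to \mathrm{Gal}(L_{n,d}/K_{n,d})\to 1$ you conclude that $\lvert N(Cl_2(L_{n,d}))\rvert=\lvert Cl_2(L_{n,d})\rvert$, i.e.\ that the norm is injective on the $2$-part. This is a non sequitur: the index-$2$ statement only says $\lvert Cl_2(K_{n,d})\rvert = 2\,\lvert N(Cl_2(L_{n,d}))\rvert$, and gives no control on $\ker N$; to deduce injectivity you would already need to know $h_2(K_{n,d})=2^{n+r}$, which is the quantity you are computing. (Injectivity does hold here a posteriori, but it is a consequence of the theorem, not an input.) In case (2) you correctly identify that Chevalley's formula requires $[E_{K_{n,d}}:E_{K_{n,d}}\cap N(L_{n,d}^*)]$ and $\lvert Cl_2(L_{n,d})^\sigma\rvert$; neither is supplied, and the unit group of $K_{n,d}$ has rank $2^n-1$, so this is not a routine verification --- this is exactly the computation the paper's $Q$-index route circumvents. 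Finally, for the isomorphism type in case (1), a $2$-rank bound of $2$ plus the order does not separate $\ZZ/2\ZZ\times\ZZ/2^{n+r-1}\ZZ$ from, say, $\ZZ/4\ZZ\times\ZZ/2^{n+r-2}\ZZ$; the paper settles this by invoking McCall--Parry--Ranalli for the rank at level $1$ together with Mizusawa's result that $Cl_2(K_{n,d})$ is of type $(2,2^{\bullet})$ for large $n$, an external input your sketch does not replace. (For cyclicity in case (2) the clean argument is that the norm is surjective because $L_{n,d}/K_{n,d}$ is ramified, so $Cl_2(K_{n,d})$ is a quotient of the cyclic group $Cl_2(L_{n,d})$; this would repair that part of your plan.)
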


 	\section*{Notations}
 	Let $k$ be a $\mathrm{CM}$ number field.	The next notations will be used for the rest of this article:
 	{  \begin{itemize}
 			\item $d$:  An odd positive square-free integer,
 			\item $n$: An integer $\geq 0$,
 			\item $K_n=\mathbb{Q}(\zeta_{2^{n+2}})$,
 			\item $K_n^+$: The maximal real subfield of  $K_n$,
 			\item $L_{n,d}=K_n(\sqrt{d})$,
 			\item $L_{n,d}^+$:   The  maximal real subfield of $L_{n,d}$,
 			\item $\tau$: A topological generator of $Gal(L_{\infty,d}/L_{0,d})$,	
 			\item $\Lambda=\ZZ_p[[T]]$ for $T=\tau-1$,
 			\item $\omega_n=(T+1)^{2^n}-1$,
 			\item $\nu_{n,m}=\omega_n/\omega_m$ for $n> m\ge 0$,
 			\item $\mu(M)$, $\lambda(M)$: The Iwasawa invariants introduced in \eqref{iwasawa} for a $\Lambda$-torsion module $M$,
 			\item $\lambda(k)=\lambda(A_{\infty}(k))$ (a precise definition of $A_{\infty}$ for the base field $k$ is given in Section \ref{sec:pre}), if $k=L_{0,d}$ we will also write $\lambda$ for $\lambda(L_{0,d})$,
 			\item $\lambda^-(k)=\lambda(A_{\infty}^-(k))$ (a precise definition of $A_{\infty}^-$ for the base field $k$ is given in Section \ref{sec:pre}), if $k=L_{0,d}$ we will also write $\lambda^-$ for $\lambda^-(L_{0,d})$,
 			\item $\mu^-(k)=\mu(A^-_{\infty}(k))$,
 			\item $\mathcal{N}$:  The application norm for the extension $L_{n,d}/K_n$,
 			\item $E_{k}$:   The unit group of  $k$,
 			\item $\mathrm{Cl}(k)$:  The class group of $k$,
 			\item $\mu_{k}$: The number of roots of unity contained in $k$,
 			\item $W_k$: The group of roots of unity in $k$,
 			\item $h_2(d)$:  The $2$-class number of the quadratic field $\mathbb{Q}(\sqrt{d})$,
 			\item $\genfrac(){}{0}{a}{p}_4$: The   biquadratic residue symbol,
 			\item $\left( \frac{\alpha,d}{\mathfrak p}\right)$:  The quadratic  norm residue symbol 	for $L_{n,d}/K_n$,
 			\item $Q_k$: Hasse's unit index of a CM-field $k$,
 			\item $q(L_{1,d}):=(E_{L_{1,d}}: \prod_{i}E_{k_i})$, with  $k_i$ are  the  quadratic subfields	of $L_{1,d}$.	

 	\end{itemize}}

 	\section{Some preliminary results on the minus part of the $2$-class group}
 	\label{sec:pre}
 	Let $p$ be a prime and $k$ be an arbitrary $\mathrm{CM}$-field containing the $p$-th roots of unity (the $4$-th roots of unity if $p=2$).
 	Consider the cyclotomic $\ZZ_p$-extension of $k$, denoted by $k_{\infty}$.
 	The complex conjugation of $k$, denoted by $j$, acts on  $A_n$, the $p$ part of the class group  of the intermediate fields $k_n$,  as well as on the projective limit $A_{\infty}=\lim_{\infty\leftarrow n} A_n$.  Usually one defines the minus part of the class group as $\widehat{A^-_n}=\{a\in A_n\mid ja=-a\}$ and the plus part as $\widehat{A^+_n}=\{a\in A_n\mid ja=a\}$. For $p\neq 2$ this yields a direct decomposition of $A_n= \widehat{ A_n^-}\oplus \widehat{ A_n^+}$. Further, it is well known that there is no capitulation on the minus part for $p\neq 2$. For $p=2$ this is in general not true. To avoid this problem we define $A_n^+$ as the group of strongly ambiguous classes with respect to the extension $k_n/k_n^+$ and $A_n^-=A_n/A_n^+$. Note that $A_{\bullet}^+=\widehat{A_{\bullet}^+}$ and $A_{\bullet}^-\cong \widehat{A_{\bullet}^-}$ for $p\neq 2$ (see \cite{Katharina}).For the rest of the paper we will study only cyclotomic $\ZZ_2$-extensions. 
 	
 	Note that the projective limit $A_{\infty}^-=\lim_{\infty\leftarrow n}A_n^-$ is a finitely generated $\Lambda$-torsion module. 
 	
 	\begin{Lemma}
 		\label{lem:rankbounded}
 		Assume that $\mu(A_{\infty}^-)=0$. Then there exists some $n_0\ge 0$ such that we have $\lambda(A_{\infty}^-)\ge 2\textup{-rank}(A_n^-)$ for all $n\ge n_0$.
 	\end{Lemma}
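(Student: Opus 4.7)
The plan is to identify $X := A_\infty^-$ with $\ZZ_2^{\lambda}$ as a $\ZZ_2$-module, where $\lambda := \lambda(A_\infty^-)$, and then dominate the $2$-rank of each $A_n^-$ by the $2$-rank of $X$.

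First, the structure theorem for finitely generated $\Lambda$-torsion modules, together with $\mu(X)=0$, gives a pseudo-isomorphism $X\sim\bigoplus_i\Lambda/(f_i^{e_i})$ with distinguished polynomials $f_i$ satisfying $\sum_i e_i\deg f_i=\lambda$; in particular $X$ is finitely generated as a $\ZZ_2$-module of $\ZZ_2$-rank $\lambda$. Second, I invoke the classical fact that the minus part of the cyclotomic Iwasawa module of a $\mathrm{CM}$-field has no nontrivial finite $\Lambda$-submodule. Since the $2$-torsion submodule $X[2]$ is a finite $\Lambda$-submodule of $X$, this forces $X[2]=0$. Combined with the first step, $X$ is a finitely generated $\ZZ_2$-module of rank $\lambda$ with no $\ZZ_2$-torsion, hence $X\cong\ZZ_2^{\lambda}$, so $2\text{-rank}(X)=\lambda$.

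Third, I need a control statement: for $n$ large enough the canonical projection $X=\lim_{\infty\leftarrow m}A_m^-\twoheadrightarrow A_n^-$ is surjective. It suffices that the norm maps $\mathcal{N}\colon A_{m+1}^-\to A_m^-$ be surjective for $m\geq n_0$, which is a standard consequence of the cyclotomic $\ZZ_2$-tower being totally ramified (from some layer onward) at each prime of $L_{0,d}$ above $2$. Since the $2$-rank is non-increasing under surjections, one concludes $2\text{-rank}(A_n^-)\leq 2\text{-rank}(X)=\lambda$ for all $n\geq n_0$, which is the desired bound.

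The main obstacle is the "no finite $\Lambda$-submodule" step. The paper defines $A_n^-=A_n/A_n^+$ via strongly ambiguous classes rather than the eigenspace $\widehat{A_n^-}$ (the two genuinely differ when $p=2$, as remarked at the start of the section), so the classical structural result has to be adapted to this quotient, presumably by comparing the two versions through a natural map with controlled kernel and cokernel. Once that ingredient is in place, the remaining steps are formal consequences of the structure theorem and the standard control statement for the tower.
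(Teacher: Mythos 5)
Your proof is correct and follows essentially the same route as the paper: the absence of a finite $\Lambda$-submodule in $A_\infty^-$ together with $\mu=0$ gives $A_\infty^-\cong\ZZ_2^{\lambda}$, and surjectivity of $A_\infty^-\to A_n^-$ once the tower is totally ramified above $2$ then bounds $2$-rank$(A_n^-)$ by $\lambda$. The adaptation of the ``no finite submodule'' statement to the quotient definition $A_n^-=A_n/A_n^+$, which you flag as the main obstacle, is precisely what the cited reference \cite[Theorem 2.5]{Katharina} supplies, so no further work is needed there.
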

 	\begin{proof}
 		By  \cite[Theorem 2.5]{Katharina} there is no finite submodule in $A_{\infty}^-$. So if $\mu(A_{\infty}^-)=0$ the $2$-rank  and  $\lambda$-invariant of $A_{\infty}^-$  are equal. Thus, the claim is immediate for $n_0$ being the index such that all primes above $2$ are totally ramified in $K_{\infty}/K_{n_0}$.
 	\end{proof}
 	\begin{Remark}
 		If $K=L_{0,d}=\mathbb{Q}(\sqrt{-1},\sqrt{d})$, then $K_{\infty}/K$ is totally ramified and $n_0=0$.
 	\end{Remark}
 	\begin{Lemma}
 		\label{equiv}
 		Assume that $\mu(A_{\infty}^-)=0$. Then $\lambda(A^-_{\infty})=\lambda(\widehat{A^-_{\infty}})$.
 	\end{Lemma}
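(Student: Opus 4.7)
The plan is to construct a $\Lambda$-equivariant map $\psi\colon A_\infty^-\to \widehat{A_\infty^-}$ that is a pseudo-isomorphism; since pseudo-isomorphic finitely generated $\Lambda$-torsion modules share the same $\mu$- and $\lambda$-invariants, the stated equality will follow.

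First I would work at finite level. Since $(1+j)(1-j)=0$, multiplication by $1-j$ on $A_n$ has kernel exactly $\widehat{A_n^+}=\{a\in A_n:ja=a\}$ and image contained in $\widehat{A_n^-}$. Strongly ambiguous classes are in particular $j$-fixed, so $A_n^+\subseteq\widehat{A_n^+}$ and $1-j$ descends to a map
$\psi_n\colon A_n^-=A_n/A_n^+\longrightarrow \widehat{A_n^-}$
with $\ker\psi_n=\widehat{A_n^+}/A_n^+$ and $\mathrm{coker}\,\psi_n=\widehat{A_n^-}/(1-j)A_n$. Both quotients are annihilated by $2$: the cokernel is the Tate cohomology group $\widehat{H}^0(\langle j\rangle,A_n)$, and for the kernel one uses the fact that an ambiguous class that is simultaneously $j$-antiinvariant has order dividing $2$.

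Next I would verify that $\ker\psi_n$ and $\mathrm{coker}\,\psi_n$ are \emph{uniformly} bounded in $n$. The order of $\widehat{H}^0(\langle j\rangle,A_n)$ is controlled by classical genus theory for the quadratic extension $k_n/k_n^+$, where only the ramification data (finite, and eventually stable along a cyclotomic $\ZZ_2$-tower of a CM field) and a unit index enter. The quotient $\widehat{A_n^+}/A_n^+$, measuring the failure of an ambiguous class to be represented by an ambiguous ideal, is controlled by the capitulation kernel of $A(k_n^+)\to A(k_n)$ and by a unit-norm quotient of the form $E_{k_n^+}/N_{k_n/k_n^+}E_{k_n}$; both remain bounded along the tower. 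Taking the inverse limit then yields a $\Lambda$-map $\psi_\infty\colon A_\infty^-\to\widehat{A_\infty^-}$ with finite kernel and finite cokernel. Under the assumption $\mu(A_\infty^-)=0$, Theorem 2.5 of \cite{Katharina} guarantees that $A_\infty^-$ has no nonzero finite $\Lambda$-submodule, so $\psi_\infty$ is injective; the cokernel remains finite. Hence $\psi_\infty$ is a pseudo-isomorphism, giving $\lambda(A_\infty^-)=\lambda(\widehat{A_\infty^-})$ as desired.

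The main obstacle will be securing the truly uniform bound on $\widehat{A_n^+}/A_n^+$ and on $\widehat{H}^0(\langle j\rangle,A_n)$: while each individual level is handled by standard genus-theoretic and unit-index computations, preventing the bound from drifting with $n$ requires carefully exploiting the stabilization of the ramified places and of the Hasse unit index in the cyclotomic $\ZZ_2$-extension, together with the Mittag--Leffler property of the inverse system so that finiteness is preserved under $\varprojlim$.
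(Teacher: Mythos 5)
Your core mechanism is the same as the paper's: both arguments pass between $A_\infty^-$ and $\widehat{A_\infty^-}$ through the module $(1-j)A_\infty$, observe that the discrepancies are annihilated by $2$, and use $\mu(A_\infty^-)=0$ together with the absence of finite submodules in $A_\infty^-$. The paper phrases this as a sandwich ($2\widehat{A_\infty^-}\subset(1-j)A_\infty\subset\widehat{A_\infty^-}$ on one side, and the projection $\pi$ restricted to $(1-j)A_\infty$ having finite kernel on the other) rather than as a single map $\psi_\infty$, but that difference is cosmetic.

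Where your write-up has a genuine problem is the step you yourself flag as the main obstacle: the uniform boundedness in $n$ of $\widehat{A_n^+}/A_n^+$ and of $\widehat{A_n^-}/(1-j)A_n$. You assert that genus theory, capitulation kernels and unit indices keep these bounded along the tower, but no argument is given, and establishing such bounds would be delicate. Fortunately none of it is needed. Both groups are annihilated by $2$; for the kernel the correct reason is that $2a=(1+j)a$ for $a\in\widehat{A_n^+}$ and $(1+j)a$ is the class of the lifted norm of an ideal, hence strongly ambiguous --- not the fact about classes that are simultaneously invariant and anti-invariant, which concerns $\widehat{A_n^+}\cap\widehat{A_n^-}$, not $\widehat{A_n^+}/A_n^+$. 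Passing to the limit, $\ker\psi_\infty$ and $\mathrm{coker}\,\psi_\infty$ are finitely generated torsion $\Lambda$-modules of exponent $2$, hence have $\lambda=0$; since $\lambda$ is additive on exact sequences of torsion $\Lambda$-modules, $\lambda(A_\infty^-)=\lambda(\mathrm{im}\,\psi_\infty)=\lambda(\widehat{A_\infty^-})$ follows with no finiteness (let alone uniform finiteness) required. If you insist on a genuine pseudo-isomorphism, the hypothesis $\mu(A_\infty^-)=0$ makes $A_\infty^-$ finitely generated over $\ZZ_2$, so its exponent-$2$ submodule $\ker\psi_\infty$ is automatically finite (and then zero by the no-finite-submodule theorem you cite); but finiteness of the cokernel is not needed for the $\lambda$-statement, so the genus-theoretic detour should simply be deleted. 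A further small slip: $\widehat{A_n^-}/(1-j)A_n$ is $\widehat{H}^{-1}(\langle j\rangle, A_n)$, not $\widehat{H}^0$, though this is immaterial.
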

 	\begin{proof}
 		Note that $2A_{\infty}\subset (1+j)A_{\infty}+(1-j)A_{\infty}\subset A_{\infty}$. Clearly, all elements in $(1+j)A_{\infty}$ are strongly ambiguous. Thus, if we consider the projection \[\pi: A_{\infty}\to A_{\infty}^-\] we see that $(1+j)A_{\infty}$ lies in the kernel of $\pi$. On the other hand $j(1-j)a=-(1-j)a$. So if a class in $(1-j)A_{\infty}$ is strongly ambiguous then it is of order dividing $2$. As $\mu=0$ we obtain that $(1-j)A_{\infty}$ intersects the kernel of $\pi$ only in a finite submodule.  It follows that \[\lambda(A^-_{\infty})=\lambda((1-j)A_{\infty}).\] Note that $2\widehat{A^-_{\infty}}\subset (1-j){A^-_{\infty}}\subset \widehat{A^-_{\infty}}$. Hence, we see that \[\lambda(A_{\infty}^-)=\lambda(\widehat{A^-_{\infty}}).\]
 	\end{proof}
 	\section{Preliminaries on the fields $L_{n,d}$ and $L_{n,d}^+$}
 	
 	To determine the structure of the $2$-class group along a cyclotomic tower the $\lambda$-invariants of $A_n$ are of particular interest. Kida proved the following formula.
 	\begin{Theorem}\cite[Theorem 3]{kida}
 		Let $F$ and $K$ be   $\mathrm{CM}$-fields and $K/F$ a finite $2$ extension. Assume that $\mu^-(F)=0$. Then 
 		\[\lambda^-(K)-\delta(K)=[K_{\infty}:F_{\infty}]\left(\lambda^-(F)-\delta(F)\right)+\sum (e_{\beta}-1)-\sum(e_{\beta^+}-1),\]
 		where $\delta(k)$ takes the values $1$ or $0$ according to whether  $k_\infty$ contains the fourth roots of unity or not. The $e_{\beta}$ is the ramification index of a prime $\beta$ in $K_{\infty}/F_{\infty}$ coprime to $2$ and $e_{\beta^+}$ is the ramification index for a prime coprime to $2$ in $K_{\infty}^+/F_{\infty}^+$.
 	\end{Theorem}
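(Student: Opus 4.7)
The plan is to establish Kida's formula by analogy with the classical Riemann--Hurwitz formula for function fields. First, I would reduce to the case of a cyclic degree-$2$ extension: any finite $2$-extension $K/F$ can be filtered as a tower of cyclic degree-$2$ steps, and both sides of the claimed equality behave compatibly under composition in such a tower (the main term is multiplicative in $[K_\infty:F_\infty]$, and the ramification sums telescope), so an induction on $[K:F]$ reduces us to the cyclic degree-$2$ case.

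For a cyclic degree-$2$ extension $K_\infty/F_\infty$ with Galois group $G$, I would study $X_K^-=A_\infty^-(K)$ both as a $\Lambda$-module and as a $G$-module. Under the hypothesis $\mu^-(F)=0$, using Iwasawa's theorem together with the absence of finite $\Lambda$-submodules in $X_F^-$ (the same input invoked in the proof of Lemma \ref{lem:rankbounded}), the $\ZZ_2$-rank of $X_F^-$ equals $\lambda^-(F)-\delta(F)$, with the correction $\delta(F)$ accounting for the Tate-twist contribution of $\mu_{2^\infty}$. A standard norm/descent argument then also propagates $\mu^-=0$ from $F$ to $K$, so the analogous rank identity holds over $K$.

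The desired formula would follow from a Herbrand-quotient (Euler-characteristic) calculation for $G$ acting on $X_K^-$. Via class field theory, one represents $X_K^-$ as a suitable quotient of an idele-class module and computes $H^i(G,X_K^-)$ in terms of a fixed part which maps to $X_F^-$ with controlled kernel and cokernel (by Iwasawa-style genus theory on the tower), plus local contributions coming from the ramified primes. Tate cohomology of the local units at a prime $\beta$ coprime to $2$ and ramified in $K_\infty/F_\infty$ contributes the term $(e_\beta-1)$; projecting to the minus part subtracts the analogous contribution $(e_{\beta^+}-1)$ attached to the maximal totally real subfield, producing the stated difference. Primes above $2$ generate no term because the hypothesis $\mu^-=0$ precisely kills their potential contribution.

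The main technical obstacle I expect lies not in the reduction, but in the subtleties particular to $p=2$ and in the bookkeeping of the $\delta$ terms. Unlike the case $p$ odd, $A_n$ does not split cleanly as $A_n^+\oplus A_n^-$ but only up to $2$-torsion, as already seen in Lemma \ref{equiv}; one must verify carefully that the finite $2$-torsion discrepancies between $A^-$ and $\widehat{A^-}$ do not contaminate the $\lambda$-computation, which is guaranteed only because $\mu^-=0$. Equally delicate is the $\delta$ correction, which records whether $K_\infty$ contains $\zeta_4$ and hence whether the trivial cyclotomic character contributes a spurious rank to $X^-$; embedding this correction correctly into the local/global cohomological Euler characteristic is the real heart of the argument.
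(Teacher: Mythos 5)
First, a point of calibration: the paper does not prove this statement at all. It is quoted as \cite[Theorem 3]{kida}, and the only content the authors add is the remark immediately following it, namely that Kida's formula is stated for $\lambda(\widehat{A^-})$ while the paper works with $\lambda(A^-)$, and that Lemma \ref{equiv} shows these two invariants coincide when $\mu(A_{\infty}^-)=0$. So there is no in-paper argument to compare yours against; the relevant question is whether your sketch would stand on its own as a proof of Kida's theorem.

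As written, it would not: every genuinely hard step is named rather than carried out. The reduction to cyclic degree-$2$ steps is fine (a $2$-group admits a normal series with quotients of order $2$, and every intermediate field of $K/F$ contains the totally imaginary field $F$ and is therefore again CM), and the overall architecture --- compute a Herbrand quotient of $G$ acting on $X_K^-$ and match local cohomology against the tamely ramified primes --- is indeed the skeleton of the standard cohomological proof of Kida's formula. But the theorem \emph{is} that computation: one must actually determine $\widehat{H}^0$ and $\widehat{H}^1$ of $G$ on $X_K^-$, control the kernel and cokernel of the coinvariants map $(X_K^-)_G\to X_F^-$, and extract the terms $e_{\beta}-1$ from the inertia subgroups at the ramified primes coprime to $2$; none of this is performed, and the $\delta$-bookkeeping you correctly flag as ``the real heart'' is likewise left open. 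Two specific assertions are also off. The claim that primes above $2$ contribute nothing ``because $\mu^-=0$ kills their contribution'' misattributes the mechanism: $\mu^-=0$ is the global separability hypothesis of the Riemann--Hurwitz analogy and is needed for the entire formula, whereas the absence of terms at $2$ comes from the fact that those primes are already infinitely ramified in the cyclotomic tower, so they create no new finite inertia in $K_{\infty}/F_{\infty}$ beyond what the main term accounts for. And the statement that ``a standard norm/descent argument propagates $\mu^-=0$ from $F$ to $K$'' is itself a nontrivial theorem of Iwasawa on $\mu$-invariants in $p$-extensions, not a routine step. Your closing paragraph does correctly identify the $p=2$ subtlety ($\widehat{A^-}$ versus $A^-$), which is precisely the one point the paper itself addresses via Lemma \ref{equiv}.
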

 	Note that Kida proves results for $\lambda(\widehat{A^-})$. But due to Lemma \ref{equiv} this $\lambda$-invariant equals the $\lambda$-invariant of $A^-$.
 	\begin{Theorem}\label{thm 3}
 		\label{lambda}
 		Assume that $d$ is the product of    $r$ primes   congruent to $7$ or $9 \pmod{16}$ and $s$ primes   congruent to $3$ or $5\pmod 8$. 
 		Then $$\lambda^-(L_{0,d})=2r+s-1.$$
 	\end{Theorem}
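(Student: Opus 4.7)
My plan is to apply Kida's formula (the preceding theorem) to the quadratic extension $K/F$ with $F=\mathbb{Q}(\sqrt{-1})=\mathbb{Q}(\zeta_4)$ and $K=L_{0,d}=F(\sqrt{d})$. Both $F_\infty$ and $K_\infty$ contain $\zeta_4$, so $\delta(F)=\delta(K)=1$, and $[K_\infty:F_\infty]=2$. By Ferrero--Washington, $\mu^-(F)=\mu^-(K)=0$, so Kida's formula applies and, via Lemma~\ref{equiv}, computes the invariant $\lambda^-$ used in the paper. Classically the class number of each $F_n=\mathbb{Q}(\zeta_{2^{n+2}})$ is odd, whence $A_\infty^-(F)=0$ and $\lambda^-(F)=0$; Kida's formula therefore collapses to
\[
\lambda^-(L_{0,d}) = -1 + \sum_\beta(e_\beta-1) - \sum_{\beta^+}(e_{\beta^+}-1).
\]

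Next I would identify the ramification: the primes of $F_\infty$ ramifying in $K_\infty=F_\infty(\sqrt{d})$ are exactly those above $p\mid d$, and each is tamely ramified of index $2$ because $d$ is squarefree and odd primes are unramified in $F_\infty/\mathbb{Q}$; the same holds for $K_\infty^+/F_\infty^+$. Writing $g_{F_\infty}(p)$ and $g_{F_\infty^+}(p)$ for the numbers of primes above $p$ in $F_\infty$ and in $F_\infty^+=\mathbb{Q}_\infty$ respectively, the Kida defect becomes $\sum_{p\mid d}\bigl(g_{F_\infty}(p)-g_{F_\infty^+}(p)\bigr)$. Via the cyclotomic character, both $\mathrm{Gal}(F_\infty/F)$ and $\mathrm{Gal}(F_\infty^+/\mathbb{Q})$ are identified with $1+4\mathbb{Z}_2\cong\mathbb{Z}_2$, and for $x\in 1+4\mathbb{Z}_2$ the $\mathbb{Z}_2$-valuation of $x$ equals $v_2(x-1)-2$. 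A short case analysis on $p\bmod 4$ (in the split case $p\equiv 1\pmod 4$ the Frobenius is $p$ and there are two primes of $F$ above $p$; in the inert case $p\equiv 3\pmod 4$ the Frobenius in $\mathrm{Gal}(F_\infty/F)$ is $p^2$ and in $\mathrm{Gal}(F_\infty^+/\mathbb{Q})$ is $-p$) then yields uniformly
\[
g_{F_\infty}(p)=2^{v_2(p^2-1)-2},\qquad g_{F_\infty^+}(p)=2^{v_2(p^2-1)-3},
\]
so each $p\mid d$ contributes exactly $2^{v_2(p^2-1)-3}$ to the defect.

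The hypotheses on $d$ are tailored precisely so that $v_2(p^2-1)\in\{3,4\}$: primes $p\equiv 3$ or $5\pmod 8$ have $v_2(p^2-1)=3$ (contribution $1$), while primes $p\equiv 7$ or $9\pmod{16}$ have $v_2(p^2-1)=4$ (contribution $2$). Summing gives $s+2r$, and Kida's formula yields $\lambda^-(L_{0,d})=2r+s-1$. The main obstacle in this plan is the decomposition calculation itself, in particular matching the two Frobenius pictures in $\mathrm{Gal}(F_\infty/F)$ and $\mathrm{Gal}(F_\infty^+/\mathbb{Q})$ so as to verify the clean identity $g_{F_\infty}(p)=2\cdot g_{F_\infty^+}(p)$ uniformly across the split and inert cases of $p$ in $F/\mathbb{Q}$.
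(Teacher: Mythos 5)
Your proposal is correct and takes essentially the same route as the paper: apply Kida's formula to $K=L_{0,d}$ over $F=\mathbb{Q}(\sqrt{-1})$ with $\delta(F)=\delta(K)=1$ and $\lambda^{-}(F)=0$, and reduce the defect term to counting, for each $p\mid d$, the primes above $p$ in $F_{\infty}$ versus $F_{\infty}^{+}$ (namely $4$ vs.\ $2$ for $p\equiv 7,9\pmod{16}$ and $2$ vs.\ $1$ for $p\equiv 3,5\pmod 8$). The only difference is that you verify these splitting numbers directly via the cyclotomic character, obtaining $g_{F_{\infty}}(p)=2^{v_2(p^2-1)-2}$ and $g_{F_{\infty}^{+}}(p)=2^{v_2(p^2-1)-3}$, where the paper cites them from its references; your computation is correct.
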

 	\begin{proof}
 		Let $K=L_{0,d}=\mathbb{Q}(\sqrt{d},\sqrt{-1})$ and $F=\mathbb{Q}(\sqrt{-1})$. Then $\delta(F)=\delta(K)=1$ and $\lambda^-(F)=0$. Every prime congruent to $7$ or $9$ modulo $16$ splits into $4$ primes in $K_n$ for $n$ large enough, while it splits only into $2$ primes in $K^+_n$ (see \cite[Proposition 1]{chems}). Primes congruent to $3$ or $5$ modulo $8$ decompose into $2$ primes in $K_n$, while $K_n^+$ contains only one prime above $p$  (see \cite[Proposition 2]{chemsZkhnin2}). As $[K_{\infty}:F_{\infty}]=[K_{\infty}^+:F_{\infty}^+]=2$ all the non trivial terms satisfy $e_{\beta}=e_{\beta^+}=2$. Plugging all of this into Kida's formula we obtain
 		\[\lambda^-(L_{0,d})-1=2(0-1)+4r+2s-2r-s=2r+s-2\] and the claim follows.
 	\end{proof}
 	The above result gives $\lambda^-(L_{0,d})$ of some fields $L_{0,d}$. Noting that $\lambda^+(L_{0,d})$ is related to the class numbers of the real  fields $L_{n,d}^+$, we need the following theorem:
 	\begin{Theorem}\label{thm 4}
 		Let $d$ be an odd positive  square-free integer and $n\geq 1$.
 		Then, the class number of $L_{n,d}^+$ is odd if and only if $d$ takes one of the following forms 
 		\begin{enumerate}[\rm 1.]
 			\item $d=q_1q_2$ with $q_i\equiv 3\pmod 4$ and $q_1$ or $q_2\equiv 3\pmod8$.
 			\item $d$ is a prime $p$ congruent to   $3\pmod 4$.
 			\item $d$ is a prime $p$ congruent to   $5\pmod 8$.
 			\item $d$ is a prime $p$ congruent to $1\pmod 8$ and $(\frac{2}{p})_4(\frac{p}{2})_4=-1$.
 		\end{enumerate}	
 	\end{Theorem}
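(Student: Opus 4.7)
The plan is to prove the theorem by a base case at $n=1$ combined with an inductive step along the cyclotomic $\ZZ_2$-tower, using Chevalley's ambiguous class number formula at each step.

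Base case ($n=1$): The field $L_{1,d}^+=\QQ(\sqrt 2,\sqrt d)$ is a real biquadratic field, so I would compute $h_2(L_{1,d}^+)$ via Kuroda's class number formula (or equivalently via genus theory of the $V_4$-extension $L_{1,d}^+/\QQ$) in terms of the class numbers of the three real quadratic subfields $\QQ(\sqrt 2)$, $\QQ(\sqrt d)$, $\QQ(\sqrt{2d})$ and the biquadratic unit index. A systematic case analysis on $d\bmod 8$, on the number of odd prime divisors of $d$, and on the relevant biquadratic residue symbols at the primes dividing $d$ should show that $h_2(L_{1,d}^+)=1$ occurs exactly in the four families listed in the statement.

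Inductive step ($n-1\to n$ for $n\ge 2$): The extension $L_{n,d}^+/L_{n-1,d}^+$ is cyclic of degree $2$ and, being a layer of a cyclotomic $\ZZ_2$-tower, is ramified only at the primes above $2$. Using the inductive hypothesis $h_2(L_{n-1,d}^+)=1$ together with Chevalley's formula,
\[
|\mathrm{Cl}(L_{n,d}^+)^{\mathrm{Gal}(L_{n,d}^+/L_{n-1,d}^+)}|_2 \;=\; \frac{2^{g-1}}{u_2},
\]
where $g$ is the number of primes of $L_{n-1,d}^+$ above $2$ and $u_2\in\{1,2\}$ is the $2$-part of $[E_{L_{n-1,d}^+}:E_{L_{n-1,d}^+}\cap N L_{n,d}^{+,*}]$. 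The decomposition of $2$ in $L_{1,d}^+$ can be computed explicitly in each of the four families and is stable under the cyclotomic tower, so that $g$ and $u_2$ can be tracked and one verifies $2^{g-1}/u_2=1$ in every case. Since a nontrivial finite $2$-group on which a $2$-group acts always has a nontrivial fixed point, the vanishing of the $2$-part of the ambiguous class group forces $\mathrm{Cl}(L_{n,d}^+)_2=0$, closing the induction.

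The main obstacle is the base case, and within it the fourth family $d=p\equiv 1\pmod 8$: there $2$ splits in $\QQ(\sqrt d)$, producing extra primes above $2$ in $L_{1,d}^+$, and the condition $(2/p)_4(p/2)_4=-1$ should arise precisely as the Hilbert-symbol criterion that forces the corresponding ambiguous-class-number contribution to collapse. For the converse direction one must exhibit, for every $d$ outside the four families, either an additional ramified prime or a missing unit-norm relation producing a nontrivial $2$-torsion class, so that $h(L_{n,d}^+)$ is then even.
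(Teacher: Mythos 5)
Your overall shape---reduce to the first layer $L_{1,d}^+=\QQ(\sqrt 2,\sqrt d)$ and classify there---is the same as the paper's, but both halves of your plan have genuine gaps, and the more serious one is the inductive step. Whenever $d\equiv 1\pmod 8$ (which happens inside families 1 and 4, e.g.\ $d=q_1q_2$ with $q_1\equiv q_2\equiv 3\pmod 8$, or $d=p\equiv 1\pmod 8$), the prime $2$ splits in $\QQ(\sqrt d)$, so every layer $L_{n,d}^+$ has $g=2$ primes above $2$, and Chevalley's formula gives $\vert \mathrm{Cl}(L_{n,d}^+)^{G}\vert_2=h_2(L_{n-1,d}^+)\cdot 2^{g-1}/u_2=2/u_2$ under the inductive hypothesis. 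Closing the induction therefore requires $u_2=2$ \emph{exactly}, i.e.\ exhibiting, for every $n$, a unit of the degree-$2^{n}$ field $L_{n-1,d}^+$ that fails to be a local norm at the primes above $2$. You assert this ``can be tracked,'' but give no mechanism, and it is not routine even for $n=2$; it is precisely the layer-by-layer unit computation that the paper's argument avoids. The paper instead uses that the tower is totally ramified at $2$ from the first layer on, so $X_n\cong X_\infty/\nu_{n,0}Y$ for $X_\infty=\varprojlim X_n$ (Washington, Lemma 13.18); then $X_1=0$ forces $X_\infty=(2,T)X_\infty$ and hence $X_\infty=0$ by Nakayama, and all higher layers come for free with no further unit input. (For the converse direction you should also say explicitly that evenness propagates up the tower because the norm on class groups is surjective in a ramified extension; your phrasing leaves it unclear at which level the nontrivial $2$-torsion class is produced.)

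The base case is also not a small matter. Determining exactly when $h(\QQ(\sqrt 2,\sqrt d))$ is odd for an arbitrary odd square-free $d$---including the appearance of the criterion $(\frac{2}{p})_4(\frac{p}{2})_4=-1$ in family 4 and the exclusion of all $d$ with three or more prime factors---is a known but substantial classification, which the paper simply cites from Conner--Hurrelbrink and Fr\"ohlich rather than reproving. Your Kuroda-formula route would have to control the unit index $q(\QQ(\sqrt 2,\sqrt d))$ uniformly in $d$ and carry out the full case analysis for the ``only if'' direction; none of this is done. As written, the proposal is a plausible programme rather than a proof: I would replace the Chevalley induction by the Nakayama argument above, which is shorter and actually closes, and either cite the literature for the first layer or commit to the full unit-index computation there.
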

 	\begin{proof}
 		The extension $L_{n+1,d}^+/L_{n,d}^+$ is a quadratic extension that ramifies at the prime ideals of $L_{n,d}^+$ lying over $2$ and is unramified elsewhere for all $n\geq 1$. Let $H(L_{n,d}^+) $ be the $2$-Hilbert class field of $L_{n,d}^+$ and $X_n$ its Galois group over $L_{n,d}^+$. 
 		Let $Y$ be  the $\Lambda$-submodule of $X_{\infty}=\lim_{\infty\leftarrow n}X_n$ such that $X_0\cong X_{\infty}/Y$. Then $X_n\cong X_{\infty}/\nu_{n,0}Y$ \cite[Lemma 13.18]{washington1997introduction}. In particular, if $X_n$ is trivial then $X_{\infty}=\nu_{n,0}X_{\infty}$ and $X_{\infty}$ is trivial by Nakayama's Lemma. Hence,  the class number of $L_{1,d}^+$ being odd implies that the class number of $L_{n,d}^+$ is odd. 
 		The converse follows from \cite[Theorem 10.1]{washington1997introduction} and the fact that     the extension $L_{n+1,d}^+/L^+_{n,d}$ is totally ramified. Hence, the class number of $L_{n,d}^+$ is odd if and only if the class number of  $L_{1,d}^+=\mathbb{Q}(\sqrt{2},\sqrt{d})$ is odd. See \cite[pp. 155, 157]{connor88} and \cite[p. 78]{froh} for the rest.
 	\end{proof}

 	\begin{Theorem}
 		\label{cyclic}
 		Let $d>2$ be  an odd square-free integer and $n\geq 2$  a positive integer. Then the $2$-class group of $L_{n,d}$
 		is  cyclic non-trivial if and only if  $d$ takes one of the following forms:
 		\begin{enumerate}[\rm1.]
 			\item $d$ is a prime congruent to $  7\pmod{16}$,
 			\item $d=pq$, where $p$ and $q$ are two primes such that  $q\equiv 3\pmod 8$ and $p\equiv 5\pmod 8$.
 		\end{enumerate}
 	\end{Theorem}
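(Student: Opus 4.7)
The plan is to compute the 2-rank of $A_n$ through the decomposition $0\to A_n^+\to A_n\to A_n^-\to 0$ from Section~\ref{sec:pre}, separate the minus- and plus-contributions, and then check that the extension is cyclic and non-trivial for exactly the two listed families of $d$.

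For the minus part, Theorem~\ref{thm 3} gives $\lambda^-(L_{0,d})=2r+s-1$, and combined with $\mu^-(L_{0,d})=0$ (Ferrero--Washington) and Lemma~\ref{lem:rankbounded} this bounds $\textup{2-rank}(A_n^-)\le 2r+s-1$ for $n\ge 2$. So already on the minus side the condition $\textup{rank}(A_n)\le 1$ restricts $d$ to $(r,s)\in\{(0,1),(1,0),(0,2)\}$: a single prime $p\equiv 3,5\pmod 8$, a single prime $p\equiv 7,9\pmod{16}$, or a product of two primes each $\equiv 3,5\pmod 8$. For the plus part, the ambiguous class number formula for the quadratic CM extension $L_{n,d}/L_{n,d}^+$ expresses $|A_n^+|$ in terms of $h_2(L_{n,d}^+)$ (controlled by Theorem~\ref{thm 4}), the ramified primes in $L_{n,d}/L_{n,d}^+$, and the unit index $[E_{L_{n,d}^+}:N_{L_{n,d}/L_{n,d}^+}E_{L_{n,d}}]$, which is in turn related to the quantity $q(L_{1,d})$ from the notation list.

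Running through the three surviving candidates, I expect the following picture: (i) for $d=p\equiv 7\pmod{16}$, Theorem~\ref{thm 4}(2) gives $A_n^+=0$ and $\lambda^-=1$, so $A_n\cong A_n^-$ is cyclic, non-trivial by $\lambda^-=1$; (ii) for $d=pq$ with $p\equiv 5$, $q\equiv 3\pmod 8$, a unit-defect calculation shows that the contribution of $A_n^+$ is absorbed inside $A_n^-$, again producing a cyclic non-trivial group; (iii) for $d=p$ a single prime $\equiv 3,5\pmod 8$, one has $\lambda^-=0$ and $A_n^+=0$, and a genus-theoretic count in $L_{0,d}/\QQ(i)$ forces $A_n=0$. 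The cases $d=p\equiv 9\pmod{16}$ with $(\frac{2}{p})_4=1$ and $d=pq$ with $p\equiv q\equiv 3\pmod 8$ are the two-generator families addressed in part (1) of the main Theorem, so $A_n$ is non-cyclic; the remaining borderline sub-cases ($p\equiv 9\pmod{16}$ with $(\frac{2}{p})_4=-1$, $p\equiv q\equiv 5\pmod 8$, etc.) are ruled out by the same Hilbert-symbol computations.

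The main obstacle is step (ii): turning the rank-and-order information into genuine cyclicity of $A_n$. This requires resolving the extension $0\to A_n^+\to A_n\to A_n^-\to 0$ concretely, by showing that a generator of $A_n^-$ lifts to $A_n$ in a way that already sees $A_n^+$ and hence leaves $A_n$ of 2-rank one. This is where the biquadratic residue condition $(\frac{2}{p})_4=1$ and the congruences $p\equiv q\pmod 8$ separate the rank-$1$ candidates of this Theorem from the rank-$2$ families of the main Theorem, and where the Hilbert-symbol calculations $\left(\frac{\alpha,d}{\mathfrak p}\right)$ at the ramified primes do the actual work.
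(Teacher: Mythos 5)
Your overall strategy (bound the rank of $A_n^-$ by $\lambda^-$ via Theorem \ref{thm 3} and Lemma \ref{lem:rankbounded}, control $A_n^+$ via Theorem \ref{thm 4}, and assemble through $0\to A_n^+\to A_n\to A_n^-\to 0$) is close in spirit to what the paper does for the sub-case $d=p\equiv 7\pmod{16}$, but as a proof of the full equivalence it has genuine gaps. First, Theorem \ref{thm 3} only applies when \emph{every} prime divisor of $d$ is congruent to $7$ or $9\pmod{16}$ or to $3$ or $5\pmod 8$; primes $\equiv 1$ or $15\pmod{16}$ are outside its scope, so your claim that the minus side already restricts $d$ to the three shapes $(r,s)\in\{(0,1),(1,0),(0,2)\}$ is not justified. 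This is exactly where the paper has to work: for $p\equiv 15\pmod{16}$ it proves non-cyclicity by a direct ambiguous-class-number computation for $L_{2,d}/K_2$ over $K_2=\QQ(\zeta_{16})$, computing the norm index $[E_{K_2}:E_{K_2}\cap\mathcal{N}(L_{2,d}^*)]$ with explicit cyclotomic units and the fact that $p$ is inert in $K_2/K_2^+$; and the remaining shapes of $d$ (including $p\equiv 1\pmod{16}$, three or more prime factors, $pq$ with $p\equiv q\equiv 5\pmod 8$, $p\equiv 9\pmod{16}$ with $(\frac{2}{p})_4=-1$, etc.) are disposed of by citing the external rank computation \cite[Theorem 6]{chemsZkhnin2}, not by the Kida bound. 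Note also a direction issue: Lemma \ref{lem:rankbounded} gives only the upper bound $\textup{2-rank}(A_n^-)\le\lambda^-$; to \emph{exclude} a given $d$ you need the lower bound $\textup{2-rank}(A_n)\ge 2$ for all $n\ge 2$, which does not follow from $\lambda^-\ge 2$ without an additional argument.

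Second, and most importantly, your step (ii) --- cyclicity for $d=pq$ with $p\equiv 5$, $q\equiv 3\pmod 8$, where $A_n^+\neq 0$ (the class number of $L_{n,d}^+$ is even here, cf.\ the proof of Theorem \ref{thm11}) --- is the actual mathematical content, and you only name it as ``the main obstacle'' without resolving it. The exact sequence gives the useless bound $\textup{2-rank}(A_n)\le\textup{2-rank}(A_n^+)+\textup{2-rank}(A_n^-)\le 2$, and showing that the extension is ``non-split'' in the required sense is precisely what a proof must supply; the paper again delegates this case entirely to \cite[Theorem 6]{chemsZkhnin2}. By contrast, your case (i) ($p\equiv 7\pmod{16}$) does match the paper's argument: odd class number of $L_{n,d}^+$ forces the strongly ambiguous classes to be controlled, $\lambda=\lambda^-=1$ bounds the rank by one, and non-triviality comes from the first layer. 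So the proposal is a reasonable roadmap for one of the cases but does not constitute a proof of the theorem without substantial additional work (or the external citations the paper relies on).
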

 	\begin{proof}
 		By \cite[Theorem 6]{chemsZkhnin2}, it suffices to check the case when $d=p$ is a prime congruent to $7\pmod{8}$. We shall distinguish two cases.
 		\begin{enumerate}[$\bullet$]
 			\item Suppose that $p$ is congruent to $15\pmod{16}$ and let $\sigma$ denote its  Frobenius homomorphism in $Gal(\mathbb{Q}(\zeta_{16})/\mathbb{Q})$. Then $\sigma(\zeta_{16})=\zeta_{16}^p$ by the definition of the  Frobenius homomorphism. Let $H$ be the group generated by $\sigma$. Then $p$ is totally split in $\mathbb{Q}(\zeta_{16})^H/\mathbb{Q}$. Since $p\equiv 15\mod{16}$, $\sigma$ is the complex conjugation. Hence, $p$ is totally split in $\mathbb{Q}(\zeta_{16})^+/\mathbb{Q}$ and inert in $\mathbb{Q}(\zeta_{16})/\mathbb{Q}(\zeta_{16})^+$.
 			
In particular, there are $4$ primes of $K_2$ lying over $p$. Then, by the ambiguous class number formula (cf. \cite[Lemma 2.4]{Qin2}) $2$-rank$(\mathrm{Cl}(L_{2,d}))=4-1-e$, where  $e$ is defined by $2^{ e}=[E_{K_2}:E_{K_2} \cap  \mathcal{N}(L_{2,d}^*)]$. The unit group of $K_2$ is given by $E_{K_2}=\langle\zeta_{16},\xi_{3}, \xi_{5}, \xi_{7} \rangle$, where $\xi_{k}=\zeta_{16}^{(1-k)/2}\frac{1-\zeta_{16}^k}{ 1-\zeta_{16}}$. Let $\mathcal{N}'$ be the norm form $K_2$ to $K_2^+$. Since $p$ is inert in $K_2/K_2^+$ we obtain for $k=3, 5$ or $7$
 			$$\left( \frac{\xi_{k},p}{\mathfrak p_{K_{2}}}\right)= \left( \frac{\mathcal{N}'(\xi_{k}),p}{\mathfrak p_{K_{2}^+}}\right)=\left( \frac{\xi_{k}^2,p}{\mathfrak p_{K_{2}^+}}\right)=1.$$
 			
 			Then $e$ is at most  $1$. So $2$-rank$(\mathrm{Cl}(L_{2,d}))\geq 4-1-1=2$. Hence, the $2$-class group of $L_{n,d}$ is not cyclic.
 			\item 	Suppose now that    $p$ is congruent to  $7\pmod{16}$, then by   Theorem \ref{thm 4},   the   class number   of $L_{n,d}^+$ odd. Hence, $\lambda=\lambda^-$. Since   the primes above $2$ are unramified in $L_{n,p}/L_{n,p}^+$ for $n$ large enough all strongly ambigous ideals in $L_{n,d}$ are actually ideals from $L_{n,d}^+$ and the $2$-rank of $A_n$ is bounded by $\lambda$. By \cite[Theorem 4.4]{ACZ}, the $2$-class group of $L_{1,p}$ is cyclic non-trivial and by Theorem \ref{lambda} $\lambda^-=1$. Which completes the proof.
 		\end{enumerate}
 	\end{proof}	
 	\begin{Theorem}
 		Assume that $d$ takes one of the forms of Theorem \ref{cyclic}.
 		Then $\lambda(L_{0,d})=1$ and  Greenberg's conjecture holds for $L_{n,d}^+$.
 	\end{Theorem}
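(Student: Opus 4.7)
The plan is to combine Theorem \ref{thm 3} with Theorem \ref{cyclic} to pin down the structure of $A_\infty(L_{0,d})$ directly, and then use genus theory to deduce Greenberg's conjecture. First I would apply Theorem \ref{thm 3}: in Case 1 ($d=p\equiv 7\pmod{16}$) one has $r=1$, $s=0$, and in Case 2 ($d=pq$ with $p\equiv 5$, $q\equiv 3\pmod 8$) one has $r=0$, $s=2$. Both give $\lambda^-(L_{0,d})=2r+s-1=1$; in particular $|A_n^-|\to\infty$ and a fortiori $|A_n|\to\infty$.

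Next I would use Theorem \ref{cyclic}: $A_n$ is cyclic for $n\ge 2$. Since $L_{0,d}\supset\QQ(i)$, the cyclotomic $\ZZ_2$-extension $L_{\infty,d}/L_{0,d}$ is totally ramified at each prime above $2$, and by the standard fact that in a totally ramified $\ZZ_p$-tower the norm maps on $p$-class groups are surjective we have $A_{n+1}\twoheadrightarrow A_n$ for every $n\ge 2$. Thus $A_\infty=\lim_{\infty\leftarrow n}A_n$ is an inverse limit of cyclic $2$-groups of unbounded orders with surjective transitions, and so $A_\infty\cong\ZZ_2$ as a topological $\ZZ_2$-module. This gives at once $\mu(L_{0,d})=0$ and $\lambda(L_{0,d})=1$, which is the first assertion.

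For Greenberg's conjecture, the short exact sequence $0\to A_n^+\to A_n\to A_n^-\to 0$, together with the Iwasawa formulae $|A_n|=2^{n+\nu}$ and $|A_n^-|=2^{n+\nu^-}$ (valid for $n\gg 0$ since $\lambda=\lambda^-=1$ and $\mu=\mu^-=0$), forces $|A_n^+|$ to stabilise at $2^{\nu-\nu^-}$. To translate this into boundedness of $h_2(L_{n,d}^+)$ I would use the classical genus-theoretic exact sequence comparing strongly ambiguous classes in $L_{n,d}/L_{n,d}^+$ with $\mathrm{Cl}_2(L_{n,d}^+)$: since the only primes ramifying in $L_{n,d}/L_{n,d}^+$ lie above $2$ and Hasse's unit index $Q_{L_{n,d}}$ stabilises in the tower, both the capitulation kernel and the cokernel of the natural lift $\mathrm{Cl}_2(L_{n,d}^+)\to A_n^+$ are bounded independently of $n$, whence $h_2(L_{n,d}^+)$ is bounded.

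The main obstacle is this last translation: controlling Hasse's unit index and the exact ramification behaviour above $2$ uniformly in the tower. In Case 1 the step is in fact trivial because Theorem \ref{thm 4}(2) already gives $h_2(L_{n,d}^+)=1$, so all of the work lives in Case 2, where Theorem \ref{thm 4} does not apply and one genuinely needs the genus-theoretic analysis together with a stabilisation statement for $Q_{L_{n,d}}$.
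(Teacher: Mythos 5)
Your outline is essentially sound and, for the first assertion, coincides with the paper's argument: cyclicity of $A_n$ from Theorem \ref{cyclic} plus $\lambda^-(L_{0,d})=2r+s-1=1$ from Theorem \ref{thm 3} (your case check of $r,s$ is correct) forces $\lambda=\lambda^-=1$ and $\mu=0$; your extra step identifying $A_\infty\cong\ZZ_2$ via surjective norms in the totally ramified tower is a clean way to make the paper's terse ``and the first claim follows'' precise. For Greenberg's conjecture you diverge mildly: you bound $A_n^+$ by comparing the Iwasawa formulas for $\vert A_n\vert$ and $\vert A_n^-\vert$ in the exact sequence $0\to A_n^+\to A_n\to A_n^-\to 0$, whereas the paper argues that $\widehat{A_n^-}\cap A_n^+$ has exponent $2$ and uses cyclicity of $A_n$ to conclude $\vert A_n^+\vert\le 2$; both routes work, and yours has the advantage of not needing cyclicity at that step (only $\lambda=\lambda^-$, $\mu=\mu^-=0$). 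The one place you overcomplicate matters is the final translation to $h_2(L_{n,d}^+)$: the ``main obstacle'' you identify (uniform control of $Q_{L_{n,d}}$ and of the cokernel of $\mathrm{Cl}_2(L_{n,d}^+)\to A_n^+$) is not needed at all. The image of $\mathrm{Cl}_2(L_{n,d}^+)$ in $A_n$ consists of strongly ambiguous classes, hence lies in the bounded group $A_n^+$, and the kernel is the capitulation kernel, which has order at most $2$ for \emph{any} CM field by \cite[Theorem 10.3]{washington1997introduction} --- there is no uniformity issue in $n$ and no genus-theoretic computation to perform. This is exactly how the paper concludes, and it settles your Case 2 (where Theorem \ref{thm 4} is indeed unavailable) just as easily as Case 1.
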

 	\begin{proof}
 		By Theorem \ref{cyclic} the $2$-class group of $L_{n,d}$ is cyclic. By Theorem \ref{lambda} $\lambda^-(L_{0,d})=1$. Thus, $\lambda(L_{0,d})=\lambda^-(L_{0,d})=1$ and the first claim follows. Recall that $\lambda(\widehat{A^-_{\infty}})=\lambda(A^-_{\infty})$. Note that the groups $\widehat{A^-_n}\cap A^+_n$ are of exponent $2$. So if we know that the $2$-class group of $L_{n,d}$ is cyclic and $\lambda(\widehat{A^-_{\infty}})=1$, then $A_n^+$ contains at most $2$ elements. As the capitulation kernel $A_n(L_{n,d}^+)\to A_n(L_{n,d})$ contains at most $2$ elements due to \cite[Theorem 10.3]{washington1997introduction}, we see that the $2$-class group of $L_{n,d}^+$ is uniformly bounded.
 	\end{proof} 
 	We will also need \cite[Theorem 5]{chemsZkhnin2} and \cite[Theorem 1]{chems} which are summarized in the following Theorem.
 	\begin{Theorem}
 		\label{thmrank2}Let $n\geq 1$ and assume that $d$ takes one of the following forms:
 		\begin{enumerate}[\rm 1.]
 			\item $d=pq$, for two primes $p$ and $q$   congruent to $3\pmod 8$.
 			\item $d=p$, is a prime congruent to $9\pmod{16}$.
 		\end{enumerate}
 		Then the rank of the $2$-class group of $L_{n,d}$ is $2$.
 	\end{Theorem}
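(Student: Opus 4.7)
The plan is to prove both cases by applying the ambiguous class number (Chevalley) formula to the quadratic extension $L_{n,d}/K_n$, following the template used in the first sub-case of the proof of Theorem \ref{cyclic}. Since $\mathrm{Cl}(K_n)$ has odd order (a classical fact for the cyclotomic fields in the relevant range), Chevalley's formula gives
\[
  2\text{-rank}(\mathrm{Cl}(L_{n,d})) \;=\; t-1-e,
\]
where $t$ is the number of primes of $K_n$ ramified in $L_{n,d}/K_n$ and $2^{e}=[E_{K_n}:E_{K_n}\cap \mathcal{N}(L_{n,d}^{*})]$. The work splits into three pieces: count $t$, compute $e$, and confirm the outcome is independent of $n$.

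For the count $t$, I invoke the splitting behavior already recalled in the proof of Theorem \ref{thm 3}. A prime $p\equiv 9\pmod{16}$ decomposes into four primes in $K_n$ for $n\ge 2$, and a prime $p\equiv 3\pmod 8$ decomposes into two primes in $K_n$. Together with the observation that $d\equiv 1\pmod 8$ in both cases, which prevents additional ramification at the prime above $2$, this yields $t=4$ throughout.

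For the index $e$, I would take the standard generators of $E_{K_n}$, namely $\zeta_{2^{n+2}}$ and the cyclotomic units $\xi_{k}=\zeta_{2^{n+2}}^{(1-k)/2}(1-\zeta_{2^{n+2}}^{k})/(1-\zeta_{2^{n+2}})$ used in the proof of Theorem \ref{cyclic}, and evaluate the norm residue symbols $\left(\frac{\xi_{k},d}{\mathfrak{p}}\right)$ at each ramified prime. Using the identity $\mathcal{N}'(\xi_{k})=\xi_{k}^{2}$ for the norm $\mathcal{N}'\colon K_n\to K_n^{+}$, together with the fact that each ramified prime $\mathfrak{p}$ of $K_n$ lies over a prime of $K_n^{+}$ that splits in $K_n/K_n^{+}$ (which holds in both cases because $p$ has strictly more primes above it in $K_n$ than in $K_n^{+}$), the symbol at $\xi_{k}$ reduces to the square of a symbol downstairs and therefore equals $1$. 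This collapses the unit-index computation to the contribution of $\zeta_{2^{n+2}}$ alone, and the hypotheses $p\equiv 9\pmod{16}$ in Case~2 and $p\equiv q\equiv 3\pmod 8$ in Case~1 should force the root-of-unity symbol to be non-trivial at exactly one of the four ramified primes, giving $e=1$. Hence $2\text{-rank}(\mathrm{Cl}(L_{n,d}))=4-1-1=2$.

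The main obstacle is the delicate local analysis behind the $\zeta_{2^{n+2}}$-symbol: one must use quartic reciprocity and a careful computation modulo suitable powers of $2$ to show that $e=1$ exactly, ruling out both $e=0$ (which would give rank $3$) and $e\ge 2$ (which would give rank $\le 1$, contradicting the expected answer). The uniformity in $n$ is the most straightforward part: $t$ and $e$ stabilise once the primes above $d$ attain their eventual splitting in the tower, so the calculation needs to be carried out at only one layer (as is done for $n=1$ in \cite[Theorem 5]{chemsZkhnin2} and \cite[Theorem 1]{chems}) and then propagates to every $n\ge 1$.
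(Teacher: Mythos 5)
The paper does not actually prove this statement: Theorem \ref{thmrank2} is explicitly a summary of \cite[Theorem 5]{chemsZkhnin2} and \cite[Theorem 1]{chems}, so the ``paper's proof'' is a citation. Your strategy --- Chevalley's ambiguous class number formula for $L_{n,d}/K_n$ with $h_2(K_n)=1$, giving $2\textup{-rank}=t-1-e$, together with the count $t=4$ --- is the right template and matches what the paper itself does in the first sub-case of Theorem \ref{cyclic}. The prime counts and the non-ramification of $2$ (since $d\equiv 1\pmod 8$ in both cases) are correct.

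However, there is a genuine gap in the only step that carries real content, namely $e=1$. Your argument for collapsing the unit-index computation to the $\zeta_{2^{n+2}}$-contribution is invalid: the identity $\left(\frac{\xi_k,d}{\mathfrak p_{K_n}}\right)=\left(\frac{\mathcal N'(\xi_k),d}{\mathfrak p_{K_n^+}}\right)=\left(\frac{\xi_k^2,d}{\mathfrak p_{K_n^+}}\right)=1$ requires the local and global norms from $K_n$ to $K_n^+$ to agree at $\mathfrak p$, i.e.\ that $\mathfrak p_{K_n^+}$ has a \emph{unique} prime above it. That is exactly the situation in the paper's Theorem \ref{cyclic}, where $p\equiv 15\pmod{16}$ is inert in $K_2/K_2^+$. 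In your two cases the ramified primes \emph{split} in $K_n/K_n^+$ (you note this yourself: $4$ primes upstairs versus $2$, resp.\ $2$ versus $1$), so $K_{n,\mathfrak P}=K^+_{n,\mathfrak p}$ and the symbol at $\mathfrak P$ equals the symbol of $\xi_k$ itself downstairs, not its square; the projection formula only yields that the product of the symbols at the two conjugate primes is $1$. Hence the $\xi_k$ may well contribute to $e$, and the computation of $e$ remains entirely undone --- you defer it as a ``delicate local analysis'' that the hypotheses ``should force''. (Note also that a global element cannot have symbol $-1$ at exactly one ramified prime, by the product formula, so $e=1$ must be established by showing the image of $E_{K_n}$ in $(\ZZ/2\ZZ)^4$ has order exactly $2$.) Finally, the uniformity in $n$ is not automatic from stabilisation of splitting: $E_{K_n}$ grows with $n$, so the index $[E_{K_n}:E_{K_n}\cap\mathcal N(L_{n,d}^*)]$ must be controlled at every layer, which is precisely why the paper leans on the cited references rather than a one-layer computation.
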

 	

 	\section{{ The Main results}}
 	
 	\begin{Lemma}
 		\label{lemma}
 		Let $d$ be a square-free integer. We have:
 		\begin{enumerate}[\rm $1.$]
 			\item  $h_2(L_{1,d})=2\cdot h_2(-d)$,
 			if $d=pq$, for two primes $p\equiv 5\pmod 8$ and $q\equiv 3\pmod 8$.
 			\item $h_2(L_{1,d})=h_2(-2d)$, if $d=pq$, for two primes $p\equiv q\equiv 3\pmod 8$ or $d=p$ for a prime  $p$ such that $p\equiv 9\pmod{16}$ and $(\frac{2}{p})_4=1$.
 			
 		\end{enumerate}
 	\end{Lemma}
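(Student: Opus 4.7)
The plan is to compute $h_2(L_{1,d})$ by Kuroda's class number formula applied to the octic multiquadratic CM-field $L_{1,d} = \mathbb{Q}(\sqrt{2},\sqrt{-1},\sqrt{d})$. There are two natural ways to organize this: apply Kuroda's formula for the octic multiquadratic extension in one step, or iterate its biquadratic version twice. I would proceed by iteration, since each intermediate layer already has well understood $2$-class number.

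First I would view $L_{1,d}$ as a biquadratic extension of $\mathbb{Q}(\sqrt{-1})$ obtained by adjoining $\sqrt{2}$ and $\sqrt{d}$. Its three intermediate CM biquadratic subfields are $K_1 = \mathbb{Q}(\sqrt{-1},\sqrt{2})$, $L_{0,d} = \mathbb{Q}(\sqrt{-1},\sqrt{d})$ and $\mathbb{Q}(\sqrt{-1},\sqrt{2d})$. Since $h_2(\mathbb{Q}(\sqrt{-1})) = h_2(K_1) = 1$, Kuroda's formula reduces to
$$h_2(L_{1,d}) \;=\; 2^{-a}\, q_1 \cdot h_2(L_{0,d}) \cdot h_2\bigl(\mathbb{Q}(\sqrt{-1},\sqrt{2d})\bigr),$$
for an explicit exponent $a$ determined by the signature and a unit index $q_1$ attached to this biquadratic tower, closely related to the index $q(L_{1,d})$ in the Notation.

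Next I would apply Kuroda's formula again to each of the two biquadratic fields $L_{0,d}$ and $\mathbb{Q}(\sqrt{-1},\sqrt{2d})$ over $\mathbb{Q}$, expressing their $2$-class numbers in terms of the $2$-class numbers of their three quadratic subfields $\mathbb{Q}(\sqrt{-1})$, $\mathbb{Q}(\sqrt{\pm d})$ and $\mathbb{Q}(\sqrt{-1})$, $\mathbb{Q}(\sqrt{\pm 2d})$, respectively, times the corresponding unit indices. Genus theory together with the hypotheses on $d$ (the congruences of $p,q$ modulo $8$ or $16$, and the biquadratic residue condition $(\tfrac{2}{p})_4 = 1$ in the prime case) then determines the $2$-class numbers of $\mathbb{Q}(\sqrt{d})$, $\mathbb{Q}(\sqrt{2d})$ and relates $h_2(\mathbb{Q}(\sqrt{-d}))$ and $h_2(\mathbb{Q}(\sqrt{-2d}))$ directly to $h_2(-d)$ and $h_2(-2d)$ appearing in the statement.

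The main obstacle will be the explicit computation of the unit indices arising at each level. This requires producing fundamental systems of units of the three biquadratic subfields, and then testing via norm residue symbols $\left(\frac{\alpha,\cdot}{\mathfrak{p}}\right)$ which products of those fundamental units become norms, equivalently squares, inside the octic field $L_{1,d}$. The two cases of the lemma are distinguished precisely by the values of these norm residue symbols at the ramified primes: this split is what produces the factor $2$ in case~1 (where $h_2(-d)$ dominates and an extra unit obstruction survives) versus its absence in case~2 (where $h_2(-2d)$ is the dominant $2$-power and the obstruction disappears).
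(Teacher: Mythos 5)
Your overall strategy --- reduce $h_2(L_{1,d})$ to the $2$-class numbers of its quadratic subfields via a Kuroda-type class number formula times a unit index --- is exactly the paper's strategy for case 1; the paper just uses the one-shot formula for the octic multiquadratic field (Wada's formula, with all seven quadratic subfields appearing at once),
\[h_2(L_{1,d})=\tfrac{1}{2^5}\,q(L_{1,d})\,h_2(pq)\,h_2(-pq)\,h_2(2pq)\,h_2(-2pq),\]
rather than iterating the biquadratic version, which is a bookkeeping difference. The problem is that your proposal stops exactly where the content begins, and two of its supporting claims are too optimistic. First, the auxiliary class numbers must be pinned down as \emph{exact} powers of $2$: in case 1 one needs $h_2(pq)=h_2(2pq)=2$ and $h_2(-2pq)=4$. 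The last of these is a $4$-rank statement --- genus theory only gives $h_2(-2pq)\ge 4$ since three primes ramify --- so ``genus theory together with the hypotheses on $d$'' does not suffice; the paper imports these values from Conner--Hurrelbrink and Kaplan (Rédei-matrix type results). The same issue is worse in case 2, where $h_2(-pq)$ and $h_2(pq)$, $h_2(2pq)$ must all be evaluated exactly before $h_2(-2d)$ can be isolated.

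Second, and more seriously, the unit index. The paper determines $q(L_{1,d})=4$ in case 1 not by norm-residue-symbol tests but by an explicit Diophantine argument: writing the fundamental unit $\varepsilon_{2pq}=x+y\sqrt{2pq}$, it shows $x\pm1$ cannot be a perfect square because that would force $\left(\frac{2}{p}\right)=1$, contradicting $p\equiv 5\pmod 8$; a criterion of Azizi--Zekhnini--Taous then converts this into the value of $q(L_{1,d})$. Your plan to test which products of fundamental units become squares in $L_{1,d}$ is the right kind of computation, but it is only announced, and your closing heuristic misdescribes how the two cases separate: the factor $2$ in case 1 versus its absence in case 2, and the switch from $h_2(-d)$ to $h_2(-2d)$, come from which genus-theoretic factors are explicitly computable under the given congruences combined with the value of $q(L_{1,d})$, not from a single surviving ``unit obstruction'' at the ramified primes. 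Note also that for case 2 the paper gives no computation at all but cites earlier work of Azizi--Chems-Eddin--Zekhnini, so a self-contained proof along your lines would require redoing the entire unit-index analysis there as well.
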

 	\begin{proof}
 		\begin{enumerate}[\rm $1.$]
 			\item 
 		Suppose that $d$ takes the first form of the lemma.  By \cite[Corollary 19.7]{connor88}   $h_2(pq)=h_2(2pq)=2$ and by \cite[p. 353]{kaplan76} $h_2(-2qp)=4$.
 		Denote by $\varepsilon_{2pq}$ the fundamental unit of the quadratic field $\mathbb Q(\sqrt{2pq})$. We have  $\varepsilon_{2pq}=x+y\sqrt{2pq}$, for some integers $x$ and $y$. Since $\varepsilon_{2pq}$ has a positive norm we obtain $x^2-2pqy^2=1$. Thus $x^2-1= 2pqy^2$. Put $y=y_1y_2$ for $y_i\in \mathbb{Z}$. For a moment, let us suppose that we can write
 		$$\left\{ 
 		\begin{array}{ccc}
 		x\pm1&=&y_1^2\\
 		x\mp1&=&2pqy_2^2.
 		\end{array}\right. $$
 		Hence 
 		$1=\left(\frac{y_1^2}{p}\right)=\left(\frac{x\pm1}{p}\right)=\left(\frac{x\mp1\pm 2}{p}\right)=\left(\frac{\pm 2}{p}\right)=\left(\frac{ 2}{p}\right)=-1,$
 		which is impossible. So $x\pm 1$ is not square in $\mathbb{N}$. So from  the third and the fourth item of  \cite[Proposition 3.3]{AZT2016}, we deduce that $q(L_{1,d})=4$. By the class number formula $($cf. \cite[p. 201]{wada}$)$, we have 
 		\begin{eqnarray*}
 			h_2(L_{1,d})	&=&\frac{1}{2^5}q(L_{1,d})h_2(pq) h_2(-pq)h_2(2pq)h_2(-2qp)h_2(2)h_2(-2)h_2(-1)\\
 			&=&\frac{1}{2^5}q(L_{1,d})h_2(pq) h_2(-pq)h_2(2pq)h_2(-2qp)\\
 			&=&\frac{1}{2^5}\cdot4\cdot2\cdot h_2(-pq)\cdot2\cdot4  \\
 			&=&2 \cdot h_2(-pq).
 		\end{eqnarray*}
 	
 	\item Suppose now that $d$ takes one of the  forms in the second item.	  Then we have the result by    \cite[Corollary 2]{ACZ2} and   \cite[The proof of Theorem 1, p. 7]{ACZ2}.	\end{enumerate}
 	\end{proof}

 	\begin{Theorem}\label{1st main thm}
 		Let $d$ be in one of the following cases:
 		\begin{itemize}
 			\item $d=p$ be a prime congruent to $9\pmod{16}$ and assume that $(\frac{2}{p})_4=1$.
 			\item $d=pq$ for two primes congruent to $3 \pmod{8}$.
 		\end{itemize}
 		Let $2^r=h_2(-2d)$.
 		Then for $n\ge 1$ the $2$-class group of $L_{n,d}$ is isomorphic to the group $\mathbb{Z}/2\mathbb{Z}\times\mathbb{Z}/2^{n+r-2}\mathbb{Z}$. In the projective limit we obtain $\ZZ_2\times \ZZ/2\ZZ$.
 	\end{Theorem}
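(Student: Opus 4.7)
My approach reduces the structure of $A_n$ to two numerical data: its $2$-rank (already pinned to $2$ by Theorem~\ref{thmrank2}) and its order (to be computed via Iwasawa's formula combined with Lemma~\ref{lemma}). First I would compute $\lambda^-(L_{0,d})$ using Theorem~\ref{thm 3}: in case~1 one has (in the notation of that theorem) $(r,s)=(1,0)$ and in case~2 one has $(r,s)=(0,2)$, so in both cases $\lambda^-(L_{0,d})=1$. Ferrero--Washington gives $\mu^-(L_{0,d})=0$ since $L_{0,d}$ is abelian over $\QQ$.

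Next I would show that $h_2(L_{n,d}^+)$ is odd for all $n\ge 1$ via Theorem~\ref{thm 4}: in case~2 the form $d=pq$ with $p\equiv q\equiv 3\pmod 8$ falls under item~1, while in case~1 the hypotheses $p\equiv 9\pmod{16}$ and $(\frac{2}{p})_4=1$ place us in item~4 once one checks $(\frac{2}{p})_4(\frac{p}{2})_4=-1$ by a short rational biquadratic reciprocity computation. Exactly as in the proof of Theorem~\ref{cyclic}, this yields $\lambda(L_{0,d})=\lambda^-(L_{0,d})=1$ and $\mu(L_{0,d})=0$.

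Iwasawa's formula~(\ref{iwasawa}) then gives $h_2(L_{n,d})=2^{n+\nu}$ for every $n$ sufficiently large, while Lemma~\ref{lemma} computes $h_2(L_{1,d})=h_2(-2d)=2^r$. Assuming the formula already holds at $n=1$, this forces $\nu=r-1$, so $|A_n|=2^{n+r-1}$ for all $n\ge 1$. Combined with Theorem~\ref{thmrank2}, any finite abelian $2$-group of $2$-rank $2$ and order $2^{n+r-1}$ must be isomorphic to $\ZZ/2\ZZ\oplus \ZZ/2^{n+r-2}\ZZ$; passing to the projective limit yields $\ZZ_2\oplus \ZZ/2\ZZ$.

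The main obstacle is justifying rigorously that Iwasawa's formula applies already at $n=1$ rather than only from some larger $n_0$. I would handle this by analyzing $A_\infty$ as a $\Lambda$-module directly: the absence of finite $\Lambda$-submodules of $A_\infty^-$ (\cite[Theorem~2.5]{Katharina}) together with $\lambda^-=1,\ \mu^-=0$ forces $A_\infty^-\cong\ZZ_2$, while the odd class number of $L_{n,d}^+$ and the total ramification of the primes above $2$ in $L_{\infty,d}/L_{0,d}$ (Remark after Lemma~\ref{lem:rankbounded}) force $A_\infty^+$ to be a finite $\Lambda$-submodule; the $2$-rank constraint at $n=1$ then pins its order down to exactly $2$. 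The standard identification $A_n\cong A_\infty/\nu_{n,0}A_\infty$, valid for every $n\ge 1$, finally delivers both the correct order and the direct sum decomposition at every layer.
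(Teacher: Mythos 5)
Your first half tracks the paper's proof: the $2$-rank $2$ from Theorem \ref{thmrank2}, $\lambda^-(L_{0,d})=1$ from Theorem \ref{lambda} with $(r,s)=(1,0)$ resp.\ $(0,2)$, the oddness of $h_2(L_{n,d}^+)$ from Theorem \ref{thm 4} (your reciprocity check $(\frac{2}{p})_4(\frac{p}{2})_4=-1$ for $p\equiv 9\pmod{16}$ is correct since $(\frac{p}{2})_4=(-1)^{(p-1)/8}=-1$), and $h_2(L_{1,d})=2^r$ from Lemma \ref{lemma}. But the two steps that actually carry the proof both have genuine gaps. First, your assertion that ``any finite abelian $2$-group of $2$-rank $2$ and order $2^{n+r-1}$ must be isomorphic to $\ZZ/2\ZZ\oplus\ZZ/2^{n+r-2}\ZZ$'' is false: $\ZZ/4\ZZ\oplus\ZZ/2^{n+r-3}\ZZ$ has the same rank and order. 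One needs the extra input, which is where the paper uses the oddness of $h_2(L_{n,d}^+)$: since $A_n^-$ has rank one, the second generator of $A_n$ is the class of a prime ramified in $L_{n,d}/L_{n,d}^+$, and such a class has order $2$ precisely because the class number of $L_{n,d}^+$ is odd. Your final paragraph gestures at this via the finiteness of $A_\infty^+$, but the ``$2$-rank constraint at $n=1$ pins its order down to exactly $2$'' is asserted, not argued, and the middle of your proof relies on the false general principle.

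Second, and more seriously, the identification $A_n\cong A_\infty/\nu_{n,0}A_\infty$ is not the standard one and is not valid here. Washington's Lemma 13.18 gives $A_n\cong A_\infty/\nu_{n,0}Y$ for a submodule $Y$ with $A_\infty/Y\cong A_0$; taking $Y=A_\infty$ requires $A_0$ to be trivial, which you have not shown and which fails in the cases at hand. This is exactly the point the entire second half of the paper's proof is devoted to: it shows that the maximal finite submodule of $A_\infty$ is generated by classes of the totally ramified primes above $2$, hence is killed by $T$ and therefore by $\nu_{n,0}$ for $n\ge 1$, so that $\vert\nu_{n,0}A_\infty/\nu_{n,0}Y\vert$ is a constant independent of $n$; separately it computes $\vert E/\nu_{n,0}E\vert=2^{n+c}$ for all $n\ge1$ by analyzing how $\nu_{n,n-1}$ acts on the elementary module $E$ of $\ZZ_2$-rank one. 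Without a substitute for this analysis, your claim that Iwasawa's formula $\vert A_n\vert=2^{n+\nu}$ holds already at $n=1$ (rather than only for $n\ge n_0$ with $n_0$ unspecified) is unproved, and with it the deduction $\nu=r-1$ from $h_2(L_{1,d})=2^r$ — that is, the exact exponent $n+r-2$ in the conclusion — collapses.
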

 	\begin{proof}
 		By Theorem \ref{thmrank2} we know that the $2$-rank of the $2$-class group of $L_{n,d}$ equals $2$ for $n\ge 1$. Further $\lambda^-(L_{0,d})=1$ due to Theorem \ref{lambda} and $h_2(L_{1,d})=2^r$ by Lemma \ref{lemma}. By Theorem \ref{thm 4} the class number of $L^+_{n,d}$ is odd for all $n$. As there is no capitulation in $A_n^-$ (see \cite[Lemma 2.2]{Katharina}) and $\lambda^-(L_{0,d})=1$ we see that $A_n^-$ has rank one for $n$ large enough (see also Lemma \ref{lem:rankbounded}). That implies that the second generator of the $2$-class group of $L_{n,d}$ is a class of a ramified prime in $L_{n,d}/L^+_{n,d}$. As the class number of $L_{n,d}^+$ is odd these ramified classes have order $2$ and we obtain that the $2$-class group of $L_{n,d}$ is isomorphic to $\mathbb{Z}/2\mathbb{Z}\times\mathbb{Z}/2^{l_n}\mathbb{Z}$.
 		
 		Let $E$ be the elementary $\Lambda$-module associated to $A_{\infty}$. Then according to \cite[page 282-283]{washington1997introduction}  $\nu_{n,0}E=2\nu_{n-1,0}E$ for all $n\ge2$: Indeed, 
$\nu_{n,0}=\nu_{n,n-1}\nu_{n-1,0}$. As $E$ has $\ZZ_2$-rank $1$ we know that $T$ acts as $2v$ on $E$ for some $v\in \ZZ_p$. For $n\ge 2$ we have $\nu_{n,n-1}=(T+1)^{2^{n-1}}-1+2$. The term $(T+1)^{2^{n-1}}-1=T^{2^{n-1}}+O(2T)$  acts as $4v'$ on $E$ for some $v'\in \ZZ_2$. Hence, $\nu_{n,n-1}$ acts as $2u$ on $E$ for some unit $u\in \ZZ_2$ and all $n\ge 2$.
 		 Hence, \[\vert E/\nu_{n,0}E\vert=\vert E/2^{n-1}E\vert \vert E/\nu_{1,0}E\vert=2^{n-1+c'}\] for $n\ge 1$ and some constant $c'\ge 1$ independent of $n$. Note that we can rewrite this as $ \vert E/\nu_{n,0}E\vert=2^{n+c}$. As $E$ has only one $\ZZ_2$-generator we can assume that the pseudoisomorphism $\phi: A_{\infty}\to E$ is surjective. The maximal finite submodule of $A_{\infty}$ is generated by the classes $(c_n)_{n\in \mathbb{N}}$ of the ramified primes above $2$. Let $\tau$ be a generator of $Gal(L_{d,\infty}/L_{0,d})$. Then $\tau(c_n)=c_n$ as the primes above $2$ are totally ramified in $L_{\infty,d}/\mathbb{Q}(\sqrt{d})$. It follows that $Tc_n=0$. Hence, for every $n\ge 1$ the kernel of $\overline{\phi}: A_{\infty}/\nu_{n,0}A_{\infty}\to E/\nu_{n,0}E$ is isomorphic to the maximal finite submodule in $A_{\infty}$ and contains $2$ elements. 
 		Let $Y$ be such that $A_{\infty}/Y\cong A_0$. Then $A_n\cong A_{\infty}/\nu_{n,0}Y$ \cite[page 281]{washington1997introduction}. Then we obtain \[\vert A_n\vert =\vert A_{\infty}/\nu_{n,0}Y\vert=\vert A_{\infty}/\nu_{n,0}A_{\infty}\vert \vert \nu_{n,0}A_{\infty}/\nu_{n,0}Y\vert=2^{n+c+1}\vert \nu_{n,0}A_{\infty}/\nu_{n,0}Y\vert \textup{ for } n\ge 1. \]
 		As the maximal finite submodule in $A_{\infty}$ is annihilated by $\nu_{n,0}$ we see that the size of $\nu_{n,0}A_{\infty}/\nu_{n,0}Y$ is constant independent of $n$.
 		Hence, we obtain that the $2$-class group of $L_{n,d}$ is of size $2^{n+\nu}$ for all $n\ge 1$. Using that $h_2(L_{1,d})=2^r$ we obtain $\nu=r-1$. This yields $2\cdot 2^{l_n}=2^{n+r-1}$ and we obtain $l_n=n+r-2$. Noting that $L_{n,d}$ is the $n$-th step of the field $L_{0,d}$ finishes the proof of the first claim. As the direct term $\ZZ/2\ZZ$ is norm coherent the second claim is immediate.
 	\end{proof}

 	\begin{corollary} Let $d$ be in one of the following cases:
 		\begin{enumerate}[\rm $\bullet$]
 			\item $d=p$ a prime congruent to $9\pmod{16}$ and assume that $(\frac{2}{p})_4=1$,
 			\item $d=pq$ for two primes congruent to $3 \pmod{8}$.
 		\end{enumerate}
 		If $d$ takes the first form set $p=u^2-2v^2$ where $u$ and $v$ are two positive integers  such that $u\equiv 1\mod 8$.\\
 		If $d$ takes the second form set   $\left(\frac{p}{q}\right)=1$ and let the integers    $X,Y,k,l$ and $m$ be such that $2q=k^2X^2+2lXY+2mY^2$ and $p=l^2-2k^2m$ $($see  \cite[p. 356]{kaplan76} for their existence$)$. Let $2^r=h_2(-d)$. For all $n\geq 1$, we have:
 		\begin{enumerate}[\rm 1.]
 			\item If $d$ takes the first from, then  the $2$-class group of $L_{n,d}$ is isomorphic to $\ZZ/2\ZZ\times \ZZ/2^{n+1}\ZZ$  if and only if  $ \left(\frac{u}{p}\right)_4=-1$.\\
 			Elsewhere, it is isomorphic to $\mathbb{Z}/2\mathbb{Z}\times\mathbb{Z}/2^{n+r-2}\mathbb{Z}$,  where $r\geq 4$ was defined in Theorem \ref{1st main thm}.
 			\item If $d$ takes the second from, 
 			then  the $2$-class group of $L_{n,d}$ is isomorphic to $\ZZ/2\ZZ\times \ZZ/2^{n+1}\ZZ$  if and only if  $ \left(\frac{-2}{|X+lY|}\right)=-1$.

 			Elsewhere, it is isomorphic to $\mathbb{Z}/2\mathbb{Z}\times\mathbb{Z}/2^{n+r-2}\mathbb{Z}$, where $r\geq 4$ was defined in Theorem \ref{1st main thm}.
 		\end{enumerate}
 	\end{corollary}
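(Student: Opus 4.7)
The plan is to reduce the corollary to a known computation of $h_2(-2d)$ via Theorem \ref{1st main thm}, and then to invoke classical biquadratic residue criteria due to Kaplan. By Theorem \ref{1st main thm}, the $2$-class group of $L_{n,d}$ is isomorphic to $\ZZ/2\ZZ \times \ZZ/2^{n+r-2}\ZZ$ with $2^r = h_2(-2d)$. Hence it is isomorphic to $\ZZ/2\ZZ \times \ZZ/2^{n+1}\ZZ$ if and only if $r=3$, i.e.\ $h_2(-2d) = 8$; otherwise $r \geq 4$ and the structure is exactly the one already supplied by Theorem \ref{1st main thm}. Consequently the corollary reduces to the purely algebraic question: for each of the two families of $d$, when is $h_2(-2d) = 8$?

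For the first form ($d = p$, $p \equiv 9 \pmod{16}$, $\left(\frac{2}{p}\right)_4 = 1$), the assumption $\left(\frac{2}{p}\right)_4 = 1$ already forces $8 \mid h_2(-2p)$, so that $r \geq 3$ is automatic. Writing $p = u^2 - 2v^2$ with $u \equiv 1 \pmod 8$, the classical biquadratic criteria of Kaplan (cf.\ \cite{kaplan76}) characterize the $8$-rank of $\mathrm{Cl}_2(\QQ(\sqrt{-2p}))$ and yield precisely the equivalence $h_2(-2p) = 8 \iff \left(\frac{u}{p}\right)_4 = -1$. For the second form ($d = pq$, $p \equiv q \equiv 3 \pmod 8$, $\left(\frac{p}{q}\right) = 1$), one uses the parameterization $2q = k^2X^2 + 2lXY + 2mY^2$ and $p = l^2 - 2k^2 m$ guaranteed by \cite[p.~356]{kaplan76}; the symbols attached to this representation produce analogously the criterion $h_2(-2pq) = 8 \iff \left(\frac{-2}{|X+lY|}\right) = -1$. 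Combining either equivalence with Theorem \ref{1st main thm} yields the stated dichotomy.

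I expect the main obstacle to be the careful extraction of these two equivalences from \cite{kaplan76}: the $8$-rank criteria in that reference are phrased in terms of Legendre symbols attached to the quadratic forms representing $p$ (respectively $2q$), and one must verify that in the normalizations imposed here ($u \equiv 1 \pmod 8$ in the first case, positive integers $X,Y,k,l,m$ in the second) these symbols indeed coincide with the biquadratic expressions $\left(\frac{u}{p}\right)_4$ and $\left(\frac{-2}{|X+lY|}\right)$ appearing in the statement. Once this translation is made, no further Iwasawa-theoretic input is required: the two alternatives of the corollary are obtained by substituting $r=3$ and $r\geq 4$ into the conclusion of Theorem \ref{1st main thm}.
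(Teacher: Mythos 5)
Your overall strategy coincides with the paper's: reduce the dichotomy to the question of whether $h_2(-2d)=8$ or $h_2(-2d)\ge 16$, and then quote the $16$-divisibility criteria of Leonard--Williams (first family) and Kaplan (second family). The one place where you diverge, and where your write-up is incomplete, is the lower bound $8\mid h_2(-2d)$, which is needed in \emph{both} cases to know $r\ge 3$ a priori (otherwise ``$r=3$ iff the symbol equals $-1$'' does not follow from a criterion for divisibility by $16$). The paper gets this bound uniformly and cheaply: by Lemma \ref{lemma}, $h_2(L_{1,d})=h_2(-2d)$, and since $\mathrm{Cl}_2(L_{1,d})$ has rank $2$ and $\vert \mathrm{Cl}(L_{n,d})\vert\neq 4$, its order is at least $8$. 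You instead assert for the first family that $\left(\frac{2}{p}\right)_4=1$ forces $8\mid h_2(-2p)$ --- a true but nontrivial classical fact that you would still have to extract from \cite{kaplan76} or \cite{leonard1982divisibilityby16} --- and for the second family ($d=pq$ with $p\equiv q\equiv 3\pmod 8$, $\left(\frac{p}{q}\right)=1$) you do not address the bound at all. Supplying that bound (either by the paper's argument through $L_{1,d}$, or by a correct citation of the $8$-divisibility criteria) closes the gap; the remainder of your argument, including the substitution of $r=3$ versus $r\ge 4$ into Theorem \ref{1st main thm} and the care needed in matching Kaplan's normalizations to the symbols $\left(\frac{u}{p}\right)_4$ and $\left(\frac{-2}{|X+lY|}\right)$, is exactly what the paper does.
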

 	\begin{proof}By Lemma \ref{lemma} we know	$h_2(L_{1,d})=h_2(-2d)$.
 		Since the $2$-rank of $\mathrm{Cl}(L_{1,d})$  equals $2$ and $\vert \mathrm{Cl}(L_{n,d})\vert \neq 4$ (see \cite[Theorem 5.7]{ACZ}) it follows that  $h_2(-2d)$ is divisible by $8$.  Thus  \cite[Theorem 2]{leonard1982divisibilityby16} (resp. \cite[pp. 356-357]{kaplan76}) gives the first (resp. second) item.
 	\end{proof}
 	
 	We give the following numerical examples that illustrating the above corollary:
 	\begin{enumerate}
 		\item Set $p=89$, $u=17$ and $v=10$. We have $p=u^2-2v^2$ and  $\left(\frac{2}{p}\right)_4=-\left(\frac{u}{p}\right)_4=1$.  So the $2$-class group of $L_{n,d}$ is isomorphic to $\ZZ/2\ZZ\times \ZZ/2^{n+1}\ZZ$, for all $n\geq 1$.
 		\item Let $p=11$, $q=19$, $k=1$, $l=3$, $m=-1$, $X=4$ and  $Y=1$. We have :  $p=l^2-2k^2m$ and $2q=k^2X^2+2lXY+2mY^2$. Since  $\left(\frac{-2}{\left|X+lY\right|}\right)=\left(\frac{-2}{7}\right)=-1$, So the $2$-class group of $L_{n,p}$ is isomorphic to $\ZZ/2\ZZ\times \ZZ/2^{n+1}\ZZ$, for all $n\geq 1$.
 	\end{enumerate}

 	\begin{Theorem}\label{thm 2}
 		Assume that $d=pq$ is the product of two primes  $p\equiv-q\equiv 5  \pmod 8$  and $2^r=2\cdot h_2(-pq)$. Then for $n\ge 1$ the $2$-class group of $L_{n,d}$ is isomorphic to $\mathbb{Z}/2^{n+r-1}\mathbb{Z}$.
 	\end{Theorem}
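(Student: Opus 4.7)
The plan is to mirror the $\Lambda$-module analysis used in the proof of Theorem \ref{1st main thm}, substantially simplified by the cyclicity supplied by Theorem \ref{cyclic}. The hypotheses $p\equiv 5\pmod 8$ and $q\equiv 3\pmod 8$ put us in case (2) of Theorem \ref{cyclic}, so $A_n:=\mathrm{Cl}_2(L_{n,d})$ is cyclic and non-trivial for every $n\ge 2$. The theorem immediately following Theorem \ref{cyclic} yields $\lambda(L_{0,d})=\lambda^-(L_{0,d})=1$, $\mu(L_{0,d})=0$, and Greenberg's conjecture for $L_{n,d}^+$, which already establishes the second assertion of the statement. Moreover, Lemma \ref{lemma}(1) gives $|A_1|=h_2(L_{1,d})=2\,h_2(-pq)=2^r$, so $A_1\cong\ZZ/2^r\ZZ$.

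It remains to prove $|A_n|=2^{n+r-1}$ for every $n\ge 1$, since cyclicity then yields $A_n\cong\ZZ/2^{n+r-1}\ZZ$. The primes above $2$ are totally ramified throughout $L_{\infty,d}/L_{0,d}$ (see the Remark after Lemma \ref{lem:rankbounded}), hence the norm maps $A_n\to A_{n-1}$ are surjective. Combined with cyclicity of each $A_n$ and $|A_n|\to\infty$ (forced by $\lambda=1$), this shows that $A_\infty:=\varprojlim A_n$ is a torsion-free procyclic pro-$2$ group, i.e.\ $A_\infty\cong\ZZ_2$ as abelian group. Consequently $A_\infty$ has no non-trivial finite $\Lambda$-submodule, and the pseudo-isomorphism to its elementary module is an honest isomorphism $A_\infty\cong\Lambda/(T-2v)$ for some $v\in\ZZ_2\setminus\{0\}$.

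Having identified $A_\infty$, I would use the standard totally-ramified formula $A_n\cong A_\infty/\omega_n A_\infty$. The action of $\omega_n$ on $A_\infty\cong\ZZ_2$ is multiplication by $(1+2v)^{2^n}-1$, whose $2$-adic valuation equals $n+1+v_2(v)+v_2(1+v)$ by the lifting-the-exponent formula (one checks the two cases $v$ odd and $v$ even separately). Hence $|A_n|=2^{n+1+v_2(v(1+v))}$ for every $n\ge 1$, and calibrating at $n=1$ via $|A_1|=2^r$ forces $v_2(v(1+v))=r-2$. This gives $|A_n|=2^{n+r-1}$ for every $n\ge 1$, as required.

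The main obstacle is upgrading the pseudo-isomorphism to an honest isomorphism, equivalently, ruling out any non-trivial finite $\Lambda$-submodule of $A_\infty$. In the analogous step of Theorem \ref{1st main thm} such a submodule of order $2$ was present and produced the extra $\ZZ/2\ZZ$ direct summand; here cyclicity of every $A_n$ is exactly the input that eliminates it, since a non-trivial finite $\Lambda$-submodule $F\subset A_\infty$ would survive in $A_n=A_\infty/\omega_n A_\infty$ (as $\omega_n$ annihilates $F$ for $n$ large) and would contribute an extra cyclic factor, contradicting cyclicity. The remainder is a concrete $2$-adic calculation.
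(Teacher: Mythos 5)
Your overall strategy is the paper's: cyclicity from Theorem \ref{cyclic}, $\lambda=1$ from Theorem \ref{thm 3}, absence of a finite $\Lambda$-submodule of $A_\infty$, a growth law $\vert A_n\vert=2^{n+\mathrm{const}}$ valid already from $n=1$, and calibration at $n=1$ via Lemma \ref{lemma}. Your derivation of $A_\infty\cong\ZZ_2$ from the surjectivity of the norm maps between cyclic groups of unbounded order is in fact a cleaner justification of the ``no finite submodule'' step than the one-line assertion in the paper's proof, and everything up to that point is sound.

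The one step that is not correct as written is the identification $A_n\cong A_\infty/\omega_n A_\infty$. That formula requires that exactly one prime of $L_{0,d}$ ramifies in $L_{\infty,d}/L_{0,d}$. Here $d=pq\equiv 7\pmod 8$, so $-d\equiv 1\pmod 8$ and $2$ splits in $\QQ(\sqrt{-d})\subset L_{0,d}$; hence $L_{0,d}$ has two primes above $2$, both totally ramified in the tower, and the correct statement is $A_n\cong A_\infty/\nu_{n,0}Y$ for the $\Lambda$-submodule $Y$ with $A_\infty/Y\cong A_0$ (Washington, Lemma 13.18), which is what the paper uses. As stated, your formula would for instance force $A_0\cong A_\infty/TA_\infty$, which need not hold with two ramified primes. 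The slip does not affect your conclusion: since $A_\infty\cong\ZZ_2$ is $\nu_{n,0}$-torsion-free, $\vert A_n\vert=\vert A_\infty/\nu_{n,0}A_\infty\vert\cdot\vert A_\infty/Y\vert$, and the same lifting-the-exponent computation gives $v_2(\nu_{n,0})=n+v_2(1+v)$ on $A_\infty$, so $\vert A_n\vert=2^{n+c}$ for all $n\ge 1$ and the calibration $\vert A_1\vert=2^r$ again yields $\vert A_n\vert=2^{n+r-1}$. So the argument is repairable by substituting the paper's formula, after which the explicit $2$-adic computation becomes an optional detour. A minor further point: the statement you were given has no ``second assertion'' about Greenberg's conjecture; that claim appears only in the introduction's summary theorem and is proved separately.
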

 	\begin{proof}
 		We know already from Theorem \ref{cyclic} that the $2$-class group of $L_{n,d}$ is cyclic and by Theorem \ref{thm 3} that $\lambda(L_{0,d})=1$. In particular, the module $A_{\infty}$ does not contain a finite submodule and is hence isomorphic to its elementary module $E$. Let $Y$ as in the proof of Theorem \ref{1st main thm}, then there is no $\nu_{n,0}$-torsion and we obtain that the size of $\nu_{n,0}A_{\infty}/\nu_{n,0}Y$ equals a constant independent of $n$. As before we obtain $\vert A_n\vert=\vert A_{\infty}/\nu_{n,0}A_{\infty}\vert \vert \nu_{n,0}A_{\infty}/\nu_{n,0}Y\vert=2^{n+d}.$ In particular, Iwasawa's formula holds for all $n\ge 1$. Hence, $h_2(L_{1,d})=2^r=2^{1+\nu}$ and $\nu=r-1$ and the claim follows.
 	\end{proof}
 	
 	\begin{corollary} 
 		\label{cor}Let  $d=pq$   be the product of two primes $p$ and $q$ such that  $p\equiv-q\equiv 5  \pmod 8$. Then for all $n\geq 1$, we have
 		\begin{enumerate}[\rm 1.]
 			\item If $\left(\frac{p}{q}\right)=-1$, then,    the $2$-class group of $L_{n,d}$ is isomorphic to $ \ZZ/2^{n+1}\ZZ$.
 			\item If $\left(\frac{p}{q}\right)=1$ and $\left(\frac{q}{p}\right)_4=1$, then, the $2$-class group of $L_{n,d}$ is isomorphic to $ \ZZ/2^{n+2}\ZZ$.
 		\end{enumerate}	
 		Elsewhere, the $2$-class group of $L_{n,d}$ is isomorphic to $ \ZZ/2^{n+r-1}\ZZ$, where $r\geq 4$ was defined in Theorem \ref{thm 2}.
 	\end{corollary}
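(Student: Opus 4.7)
The strategy is to reduce the corollary to a determination of $h_2(-pq)$ in each case and then invoke classical criteria for the divisibility of imaginary quadratic $2$-class numbers by $4$ and by $8$.

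First, Theorem \ref{thm 2} already tells us that the $2$-class group of $L_{n,d}$ is isomorphic to $\ZZ/2^{n+r-1}\ZZ$, where $2^r=2\cdot h_2(-pq)$. Hence everything boils down to showing $h_2(-pq)=2$ in case $1$, $h_2(-pq)=4$ in case $2$, and $h_2(-pq)\ge 8$ otherwise. Because $p\equiv 5\pmod 8$ and $q\equiv 3\pmod 8$ imply $-pq\equiv 1\pmod 8$, the discriminant of $\QQ(\sqrt{-pq})$ is $-pq$ with exactly two ramified primes, and genus theory forces its $2$-class group to be cyclic of order at least $2$. Quadratic reciprocity further gives $\left(\frac{p}{q}\right)=\left(\frac{q}{p}\right)$, since $p\equiv 1\pmod 4$, so the three listed conditions really do partition the possible configurations of $(p,q)$.

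Next, I would apply R\'edei's criterion for the $4$-rank of the class group of an imaginary quadratic field. In our setting, with only the two ramified primes $p$ and $q$, this criterion collapses to the statement that $4\mid h_2(-pq)$ if and only if $\left(\frac{p}{q}\right)=1$. This directly settles case $1$: when $\left(\frac{p}{q}\right)=-1$ one has $h_2(-pq)=2$, so $r=2$, and Theorem \ref{thm 2} yields the group $\ZZ/2^{n+1}\ZZ$.

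Finally, assuming $\left(\frac{p}{q}\right)=1$ so that $4\mid h_2(-pq)$, I would invoke Kaplan's $8$-rank criterion from \cite{kaplan76}: under the congruence conditions $p\equiv 5\pmod 8$, $q\equiv 3\pmod 8$, one has $8\mid h_2(-pq)$ exactly when $\left(\frac{q}{p}\right)_4=-1$. Thus $h_2(-pq)=4$ iff $\left(\frac{q}{p}\right)_4=1$, which is case $2$ with $r=3$ and group $\ZZ/2^{n+2}\ZZ$; in every other situation $r\ge 4$, and we recover the generic form $\ZZ/2^{n+r-1}\ZZ$. The only delicate point is matching the precise statement of Kaplan's $8$-divisibility criterion in \cite{kaplan76} to the biquadratic symbol $\left(\frac{q}{p}\right)_4$ used here; this is a bookkeeping exercise rather than a genuine obstacle.
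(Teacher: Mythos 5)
Your proposal is correct and follows essentially the same route as the paper: both reduce the corollary via Theorem \ref{thm 2} to determining $h_2(-pq)$ and then invoke classical $4$-rank and $8$-rank criteria for $\QQ(\sqrt{-pq})$ (the paper cites \cite[19.6 Corollary]{connor88} for item 1 and \cite[Theorem 3.9]{Qin} for the rest, where you cite R\'edei and Kaplan). The only point you leave open --- matching the exact form of the $8$-divisibility criterion to the symbol $\left(\frac{q}{p}\right)_4$ --- is precisely what the paper also delegates to the cited reference, so there is no substantive difference.
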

 	\begin{proof}
 		By the previous theorem, the first item is direct from  \cite[19.6 Corollary]{connor88} and also the rest is direct from \cite[Theorem 3.9]{Qin} and its proof.
 	\end{proof}
 	
 	We give the following  numerical examples illustrating the above corollary:
 	\begin{enumerate}
 		\item Let $d=13\cdot 19$. We have  $\left(\frac{13}{19}\right)=-1$. So the $2$-class group of $L_{n,p}$ is isomorphic to $ \ZZ/2^{n+1}\ZZ$, for all $n\geq 1$.
 		\item Let $d=5\cdot 11$. We have $\left(\frac{5}{11}\right)=1$ and $\left(\frac{11}{5}\right)_4=1$. The $2$-class group of $L_{n,d}$ is isomorphic to $ \ZZ/2^{n+2}\ZZ$, for all $n\geq 1$.
 	\end{enumerate}

 	\hspace{0.5cm}

 	Let now $X'$, $Y'$ and $Z$ three positive integers verifying the Legendre equation  
 	\begin{eqnarray}\label{eq legendre}
 	pX'^2+qY'^2-Z^2=0
 	\end{eqnarray}
 	And  satisfying 
 	\begin{eqnarray}\label{cond 1}
 	(X',Y')=(Y',Z)=(Z,X')=(p,Y'Z)=(q,X'Z)=1,
 	\end{eqnarray}
 	and 	
 	\begin{eqnarray}\label{cond 2}
 	X' \text{ odd},  Y' \text{ even and } Z\equiv 1\pmod 4.
 	\end{eqnarray}	
 	(see \cite{Williams-pq} for more details)
 	\begin{corollary} Let  $d=pq$   be the product of two primes $p$ and $q$ satisfying $p\equiv-q\equiv 5  \pmod 8$, $\left(\frac{p}{q}\right)=1$ and   $\left(\frac{-q}{p}\right)_4=1$.  Let $X'$, $Y'$ and $Z$ be three positive integers satisfying the equation  $(\ref{eq legendre})$ and the conditions $(\ref{cond 1})$ and 
 		$(\ref{cond 2})$. If $\left(\frac{Z}{p}\right)_4\not=\left(\frac{2X'}{Z}\right)$, then the $2$-class group of $L_{n,d}$ is isomorphic to $\mathbb{Z}/2^{n+3}\mathbb{Z}$. Elsewhere, it is isomorphic to $\mathbb{Z}/2^{n+r-1}\mathbb{Z}$, for $r\geq 5$ defined as in Theorem \ref{thm 2}.
 		
 	\end{corollary}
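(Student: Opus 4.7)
The plan is to deduce the corollary directly from Theorem \ref{thm 2} by inserting a classical criterion for the $16$-divisibility of $h_2(-pq)$ taken from \cite{Williams-pq}. By Theorem \ref{thm 2} the $2$-class group of $L_{n,d}$ is cyclic of order $2^{n+r-1}$ with $2^r = 2\cdot h_2(-pq)$, so it suffices to determine $h_2(-pq)$ modulo $16$: the first alternative of the corollary corresponds to $h_2(-pq)=8$ (hence $r=4$, giving $\mathbb{Z}/2^{n+3}\mathbb{Z}$), while the second corresponds to $16 \mid h_2(-pq)$ (hence $r\ge 5$, giving $\mathbb{Z}/2^{n+r-1}\mathbb{Z}$).

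First I would verify that the hypotheses of the corollary already force $8 \mid h_2(-pq)$. Since $p \equiv 5 \pmod 8$ one has $\left(\frac{-1}{p}\right)_4 = -1$, so the hypothesis $\left(\frac{-q}{p}\right)_4 = 1$ is equivalent to $\left(\frac{q}{p}\right)_4 = -1$. Combined with $\left(\frac{p}{q}\right)=1$, this places $d$ in the residual ``elsewhere'' case of Corollary \ref{cor}, which already yields $r \ge 4$, i.e.\ $8 \mid h_2(-pq)$. Thus the only remaining question is whether the $2$-part of $h_2(-pq)$ sits exactly at $8$ or reaches at least $16$.

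Next I would appeal to the main formula of \cite{Williams-pq}. Under exactly the congruence and quartic-residue conditions assumed here, Williams characterises when $16 \mid h_2(-pq)$ in terms of a primitive solution $(X',Y',Z)$ of the Legendre equation (\ref{eq legendre}) normalised by (\ref{cond 1})–(\ref{cond 2}): one has $16 \mid h_2(-pq)$ precisely when $\left(\frac{Z}{p}\right)_4 = \left(\frac{2X'}{Z}\right)$. Plugging this equivalence back into Theorem \ref{thm 2} gives the two cases of the corollary. The principal obstacle is not conceptual but notational: Williams' quartic residue symbols are sensitive to the choice of ``primary'' square root and to the signs imposed on the Legendre triple, so the substantive step of the proof is to confirm that the normalisation (\ref{cond 1})–(\ref{cond 2}) used here matches the one in \cite{Williams-pq} and that the two symbols appearing in the hypothesis translate correctly; once this bookkeeping is in place the result drops out immediately from Theorem \ref{thm 2}.
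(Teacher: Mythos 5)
Your proposal is correct and takes essentially the same route as the paper: the authors likewise observe that the hypotheses exclude both explicit cases of Corollary \ref{cor} (forcing $r\ge 4$, i.e.\ $8\mid h_2(-pq)$) and then combine Theorem \ref{thm 2} with the $16$-divisibility criterion of \cite[Theorem 2]{Williams-pq} applied to the normalised Legendre triple $(X',Y',Z)$. Your write-up merely spells out the reduction $\left(\frac{-q}{p}\right)_4=-\left(\frac{q}{p}\right)_4$ and the bookkeeping that the paper leaves implicit.
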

 	\begin{proof} It is immediate that the assumptions of Corollary \ref{cor} are not satisfied. Hence, $r\ge 4$.
 		The rest follows directly from Theorem \ref{thm 2} and \cite[Theorem 2]{Williams-pq}.
 	\end{proof}

 	Now we close this section with some numerical examples illustrating the above corollary:
 	\begin{enumerate}
 		
 		\item  Let $p=5$, $q=19$ and $d=-pq$. Then $X'=1$, $Y'=2$ and $Z=9$ are solutions of the equation $(\ref{eq legendre})$  verifying the condition $(\ref{cond 1})$ and 
 		$(\ref{cond 2})$. Furthermore, $\left(\frac{9}{5}\right)_4=-\left(\frac{2}{9}\right)=-1$. Thus, the $2$-class group of $L_{n,d}$ is isomorphic to
 		$\mathbb{Z}/2^{n+3}\mathbb{Z}$.
 		
 		\item  Let $p=37$, $q=11$ and $d=-pq=-407$. Then $X'=1$, $Y'=56518$ and $Z=187449$ are solutions of the equation $(\ref{eq legendre})$  verifying the condition $(\ref{cond 1})$ and 
 		$(\ref{cond 2})$. Furthermore, $\left(\frac{187449}{37}\right)_4=\left(\frac{2}{187449}\right)=1$. Thus, the $2$-class group of $L_{n,d}$ is isomorphic to
 		$\mathbb{Z}/2^{n+r-1}\mathbb{Z}$, for some $r\geq5$. Indeed with these settings $r=5$ (see \cite[p. 230]{Williams-pq}).  		
 	\end{enumerate}

 	\section{{Applications}}
 	Note that   \cite[Theorem 2.5]{Katharina} holds for CM-fields containing the fourth root of unity $i$. Therefore we cannot compute  the $2$-class groups of layers of the cyclotomic $\ZZ_2$-extension of imaginary quadratic fields with the same   techniques used in the previous sections. Therefore as   applications of our above results and using class number formulas and some computations  on Hasse's unit index,   	we deduce the structure of the $2$-class groups of the cyclotomic  $\ZZ_2$-extension of   some imaginary quadratic fields. 
 	\begin{Theorem}\label{thm 10}
 		Let $d$ be an odd  positive square free integer and $r$ such that $2^r=h_2(-2d)$. Let $K_{0,d}=\mathbb{Q}(\sqrt{-d})$ and denote by $K_{n,d}$ the $n$-th step of the cyclotomic $\mathbb{Z}_2$-extension of $K_{0,d}$.
 		Suppose that $d$ takes one of the following forms:
 		\begin{itemize}
 			\item $d=pq$, for two primes $p$ and $q$ congruent to $3 \pmod{8}$.
 			\item $d=p$, for a prime   $p\equiv 9\pmod{16}$ such that that $(\frac{2}{p})_4=1$.
 		\end{itemize}
 		Then for all $n\ge 1$ the $2$-class group of $K_{n,d}$ is isomorphic to $\mathbb{Z}/2\mathbb{Z}\times\mathbb{Z}/2^{n+r-1}\mathbb{Z}$. In the projective limit we obtain $\ZZ_2\times \ZZ/2\ZZ$.
 	\end{Theorem}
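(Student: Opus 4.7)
The plan is to deduce Theorem \ref{thm 10} from Theorem \ref{1st main thm} through the quadratic extension $L_{n,d}=K_{n,d}(i)$; since $K_{n,d}$ does not contain $i$, the Iwasawa-theoretic tools from Section 3 do not apply directly to $K_{n,d}$, so we instead use class number relations together with an explicit Hasse unit index computation, as announced at the beginning of the section.

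For the \emph{order} of $A(K_{n,d})$, I would apply a Kuroda/Wada type class number formula to the biquadratic extension $L_{n,d}/\QQ_n$ (with $\QQ_n=K_n^+$), whose three intermediate quadratic subfields are $L_{n,d}^+=\QQ_n(\sqrt{d})$, $K_{n,d}=\QQ_n(\sqrt{-d})$ and $K_n=\QQ_n(i)$:
\[
h_2(L_{n,d})\;=\;\frac{q(L_{n,d}/\QQ_n)}{2^{v}}\cdot\frac{h_2(L_{n,d}^+)\,h_2(K_{n,d})\,h_2(K_n)}{h_2(\QQ_n)^2},
\]
with $q(L_{n,d}/\QQ_n)=[E_{L_{n,d}}:E_{L_{n,d}^+}E_{K_{n,d}}E_{K_n}]$ and $v$ controlled by the ramification. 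Substituting $h_2(L_{n,d})=2^{n+r-1}$ from Theorem \ref{1st main thm}, $h_2(L_{n,d}^+)$ odd from Theorem \ref{thm 4}, and the classical vanishing $h_2(K_n)=h_2(\QQ_n)=1$ (consequences of the results on the $\lambda_2$ and $\mu_2$ invariants of the cyclotomic $\ZZ_2$-extensions of $\QQ$ and $\QQ(i)$), and evaluating $q(L_{n,d}/\QQ_n)$ along the lines of the proof of Lemma \ref{lemma}, I would obtain $h_2(K_{n,d})=2\cdot h_2(L_{n,d})=2^{n+r}$.

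For the \emph{$2$-rank} I would apply Chevalley's ambiguous class number formula to the CM quadratic extension $K_{n,d}/\QQ_n$. The ramified primes are the unique prime of $\QQ_n$ above $2$ together with the primes of $\QQ_n$ above the odd prime divisors of $d$; a Hilbert symbol computation in $E_{\QQ_n}$ parallel to the one in the proof of Theorem \ref{cyclic}, together with $h_2(\QQ_n)=1$, yields $2$-rank$(A(K_{n,d}))=2$. Knowing the order is $2^{n+r}$ and the rank is $2$, the group has the form $\ZZ/2^{a}\ZZ\times\ZZ/2^{b}\ZZ$ with $1\le a\le b$ and $a+b=n+r$; to force $a=1$, I would exhibit the class of the unique prime $\mathfrak p$ of $K_{n,d}$ above $2$ as a non-square class of order exactly $2$ (strongly ambiguous over $\QQ_n$, with $\mathfrak p^2$ principal because $h_2(\QQ_n)=1$, and non-principal thanks to the hypotheses on $d$ together with the non-trivial class in $A(L_{n,d})$ to which it maps). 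Since this $\ZZ/2\ZZ$-summand is norm-coherent along the tower, the projective limit is $\ZZ_2\times\ZZ/2\ZZ$.

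The principal technical obstacle is the evaluation of the Hasse unit index $q(L_{n,d}/\QQ_n)$ at every layer: Lemma \ref{lemma} treats $n=1$ via the explicit description of fundamental units of the quadratic subfields of $L_{1,d}$ and the Connor--Kaplan tables, and extending this to arbitrary $n$ requires careful tracking of cyclotomic units of $K_n$ and of norm-compatible families of fundamental units of $L_{n,d}^+$ along the cyclotomic tower, combined with the fact that $L_{n,d}^+$ has odd class number.
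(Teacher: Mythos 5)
Your overall strategy --- deduce the result from Theorem \ref{1st main thm} via a class number formula for the bicyclic extension $L_{n,d}/K_n^+$ and then pin down the group structure --- is exactly the paper's, but the specific formula you chose leaves a genuine gap that the paper is careful to avoid. You invoke the Kuroda/Wada formula with the full unit index $q(L_{n,d}/\QQ_n)=[E_{L_{n,d}}:E_{L_{n,d}^+}E_{K_{n,d}}E_{K_n}]$, and you yourself flag its evaluation ``at every layer'' as the principal obstacle; since $E_{K_n^+}$ has rank $2^n-1$, this index is not accessible by the methods of Lemma \ref{lemma} (which is a genuinely finite computation at $n=1$), and without it your order computation does not close. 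The paper instead uses Lemmermeyer's class number formula for CM-fields \cite[Proposition 3]{lemmermeyer1995ideal}, in which only the Hasse unit indices $Q_{L_{n,d}}$, $Q_{K_n}$, $Q_{K_{n,d}}\in\{1,2\}$ and the numbers of roots of unity appear. These are controlled uniformly in $n$: the norm maps on roots of unity modulo squares are onto, so $Q_{L_{n,d}}$ divides $Q_{L_{n-1,d}}$ and by induction divides $Q_{\QQ(i,\sqrt d)}=1$; and $K_{n,d}/K_n^+$ is essentially ramified with $\mu_{K_{n,d}}=2$, so $Q_{K_{n,d}}=1$ by \cite[Theorem 1]{lemmermeyer1995ideal}. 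This is precisely the device that makes the order $h_2(K_{n,d})=2^{n+r}$ provable for all $n$, and it is missing from your argument.

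The second half of your plan also rests on unproved assertions. A Chevalley computation for $K_{n,d}/\QQ_n$ requires evaluating the norm index $[E_{\QQ_n}:E_{\QQ_n}\cap\mathcal N(K_{n,d}^{*})]$ on a unit group of rank $2^n-1$ (together with the $2^n$ ramified archimedean places), which is not ``parallel'' to the explicit rank computation over $K_2$ in the proof of Theorem \ref{cyclic}; and your claim that the class of the prime above $2$ is of order exactly $2$ but not a square in the class group --- the step that forces the $\ZZ/2\ZZ$ summand --- is asserted ``thanks to the hypotheses on $d$'' without an argument. The paper sidesteps both points by quoting \cite[Proposition 4]{mccall1995imaginary} for the $2$-rank of $K_{1,d}$ and Mizusawa's result \cite[p.~119]{mizuzawa} that the $2$-class group of $K_{n,d}$ is of type $(2,2^{\bullet})$ for $n$ large, which combined with the order computation yields the structure. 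If you want a self-contained proof along your lines, you would need to supply both the unit-index evaluation and the non-principality argument; as written, the proposal is a correct outline of the reduction but not a proof.
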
	
 	
 	\begin{proof}Let $K_n=\mathbb{Q}(  \zeta_{2^{n+2}})$ and $K_{n,d}=\mathbb{Q}(  \zeta_{2^{n+2}}+\zeta_{2^{n+2}}^{-1},\sqrt{-d})=K_n^+(\sqrt{-d})$. 	
 		We have   the following field diagram (see Figure \ref{fig:1}):
 		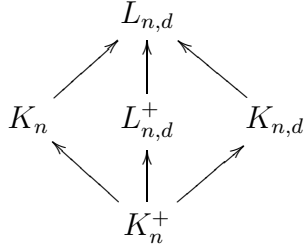
\begin{figure}[H]
 			$$\xymatrix@R=0.7cm@C=0.7cm{
 				&L_{n,d}  \ar@{<-}[d] \ar@{<-}[dr] \ar@{<-}[ld] \\
 				K_n\ar@{<-}[dr]& L_{n,d}^+ \ar@{<-}[d]& K_{n,d}\ar@{<-}[ld]\\
 				& K_n^+}$$
 			\caption{Subfields of $L_{n,d}/K_n^+$.}\label{fig:1}
 		\end{figure}
 		So by class number formula (cf. \cite[Proposition 3 and equation (1)]{lemmermeyer1995ideal}) we have:
 		\begin{eqnarray}
 		h_2(L_{n,d})&=&\frac{Q_{L_{n,d}}}{Q_{K_n}Q_{K_{n,d}}}\cdot\frac{\mu_{L_{n,d}}}{\mu_{K_n}\mu_{K_{n,d}} }\cdot\frac{h_2(K_n)h_2(K_{n,d})h_2(L_{n,d}^+)}{h_2(K_n^+)^2}\nonumber
 		\end{eqnarray}
 		It is known that $h_2(K_n)=1$ and by Theorems \ref{thm 4} and \ref{1st main thm} we respectively have    $h_2(L_{n,d}^+)=1$	and $h_2(L_{n,d})=2^{n+r-1}$. Therefore 
 		\begin{eqnarray}
 		2\cdot 2^{n+r-1}=\frac{Q_{L_{n,d}}}{Q_{K_n}Q_{K_{n,d}}}\cdot h_2(K_{n,d})
 		\end{eqnarray}
 		It is known that   $Q_{K_n}=1$. Let $k=\QQ{(i, \sqrt{d})}$. As the natural norm 		$N_{L_{1,d}/k}: W_{L_{1,d}}/W_{L_{1,d}}^2\rightarrow W_{k}/W_{k}^2 $ is onto, we obtain $Q_{L_{1,d}}$ divides $Q_{k}$ (cf.  \cite[Proposition 1]{lemmermeyer1995ideal}). Since $Q_{k}=1$ (cf. \cite[p. 19]{azizi99unite} and  the proof of \cite[Lemma 4]{ACZ2}), then $Q_{L_{1,d}}=1$. 
 		Since  $N_{L_{n,d}/L_{n-1,d}}: W_{L_{n,d}}/W_{L_n}^2\rightarrow W_{L_{n-1}}/W_{L_{n-1}}^2$ is onto, it follows that $Q_{L_{n,d}}$ divides $Q_{L_{n-1,d}}$. Thus, by induction $Q_{L_{n,d}}=1$. 
 		
 		Note that the extensions $ K_{n,d} $ are essentially ramified (cf. \cite[p. 349]{lemmermeyer1995ideal})   for all $n\geq 1$. Since $\mu_{ K_{n,d} }=2$ we obtain  by \cite[Theorem 1]{lemmermeyer1995ideal} 
 		$Q_{K_{n,d}}=1$. Hence, $h_2(K_{n,d})=2^{n+r}$ and this is for all $n\geq 1$.
 		Since   the rank of the $2$-class group of $K_{1,d}$ equals $2$
 		(cf. \cite[Proposition 4]{mccall1995imaginary})   and the $2$-class group of $K_{n,d}$ is of type $(2,2^{\bullet})$ for $n$ large enough (cf \cite[p. 119]{mizuzawa}), we get the result.

 	\end{proof}

 	Now using   second main theorem of the previous section we will show the next result.
 	\begin{Theorem} 
 		\label{thm11}
 		Assume that $d=pq$ is the product of two primes  $p\equiv-q\equiv 5  \pmod 8$  and $2^r=2\cdot h_2(-pq)$. 
 		Let $K_{0,d}=\mathbb{Q}(\sqrt{-d})$ and denote for $n\ge 1$ by $K_{n,d}$ the $n$-th step of the cyclotomic $\mathbb{Z}_2$-extension of $K_{0,d}$.
 		Then for $n\ge 1$ the $2$-class group of $K_{n,d}$ is isomorphic to $\mathbb{Z}/2^{n+r-1}\mathbb{Z}$. 
 	\end{Theorem}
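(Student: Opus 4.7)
The plan is to mirror the proof of Theorem \ref{thm 10}: apply the class number formula to the biquadratic extension $L_{n,d}/K_n^+$ whose three intermediate quadratic subfields are $K_n$, $L_{n,d}^+$, and $K_{n,d}$, and then solve the resulting identity
\[h_2(L_{n,d}) \;=\; \frac{Q_{L_{n,d}}}{Q_{K_n}Q_{K_{n,d}}}\cdot\frac{\mu_{L_{n,d}}}{\mu_{K_n}\mu_{K_{n,d}}}\cdot\frac{h_2(K_n)\,h_2(K_{n,d})\,h_2(L_{n,d}^+)}{h_2(K_n^+)^2}\]
for $h_2(K_{n,d})$. Most ingredients carry over verbatim from the proof of Theorem \ref{thm 10}: $h_2(K_n)=h_2(K_n^+)=1$ and $Q_{K_n}=1$ are classical, the root-of-unity factor equals $1/2$ because $\mu_{L_{n,d}}=\mu_{K_n}$ and $\mu_{K_{n,d}}=2$, and the unit indices $Q_{L_{n,d}}=Q_{K_{n,d}}=1$ follow from the same inductive norm-surjectivity argument and essential-ramification argument used there (these depend only on the shape of the tower, not on the arithmetic of $d$). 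Theorem \ref{thm 2} supplies $h_2(L_{n,d})=2^{n+r-1}$.

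The new ingredient, and the principal difficulty, is to pin down $h_2(L_{n,d}^+)$ exactly, since Theorem \ref{thm 4} does not apply in this case ($p\equiv 5\pmod 8$ fails $\equiv 3\pmod 4$) and hence $h_2(L_{n,d}^+)$ need not be odd. I would sandwich it as follows. On the one hand, the Theorem immediately following Theorem \ref{cyclic} proves Greenberg's conjecture for $L_{n,d}^+$ and yields the uniform bound $h_2(L_{n,d}^+)\le 4$. On the other hand, I would compute $h_2(L_{1,d}^+)=h_2(\QQ(\sqrt{2},\sqrt{pq}))=2$ directly using Kuroda's class number formula for the biquadratic $\QQ(\sqrt{2},\sqrt{pq})/\QQ$, together with the known $2$-class numbers and unit indices of the three quadratic subfields $\QQ(\sqrt{2})$, $\QQ(\sqrt{pq})$, $\QQ(\sqrt{2pq})$ under the congruences $p\equiv 5$, $q\equiv 3\pmod 8$. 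Because the primes above $2$ are totally ramified in $L_{\infty,d}^+/L_{1,d}^+$, the norm map is surjective on $2$-class groups and $h_2(L_{n,d}^+)$ is non-decreasing in $n$; combined with the upper bound $4$ and the $\lambda^+=\mu^+=0$ stabilization, this forces $h_2(L_{n,d}^+)=2$ for all $n\ge 1$.

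With this in hand the formula gives $h_2(K_{n,d})=2\,h_2(L_{n,d})/h_2(L_{n,d}^+)=2^{n+r-1}$, matching the claimed order. To upgrade the order statement to the cyclic isomorphism $\mathrm{Cl}_2(K_{n,d})\cong\ZZ/2^{n+r-1}\ZZ$, I would invoke McCall's \cite[Proposition 4]{mccall1995imaginary} to pin down the $2$-rank of $\mathrm{Cl}_2(K_{1,d})$ as $1$ in this congruence regime, and Mizusawa's structure theorem \cite[p.~119]{mizuzawa} to ensure that $\mathrm{Cl}_2(K_{n,d})$ is cyclic for all sufficiently large $n$; monotonicity of the $2$-rank along the totally ramified cyclotomic tower (quotients of cyclic groups are cyclic) then propagates cyclicity to every intermediate layer $n\ge 1$.

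The main obstacle will be the sharpening of $h_2(L_{n,d}^+)\le 4$ to $h_2(L_{n,d}^+)=2$: the Greenberg-type upper bound alone is not sharp, and one must carry out the honest level-$1$ computation (Kuroda's formula together with careful bookkeeping of the unit index $q(L_{1,d}^+)$) to rule out the value $4$. A secondary subtlety is verifying that McCall's rank formula does yield rank $1$ (and not $2$) precisely under $p\equiv 5$, $q\equiv 3\pmod 8$, since the analogous invocation in the setting of Theorem \ref{thm 10} produced rank $2$.
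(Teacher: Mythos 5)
Your skeleton is the same as the paper's: run the class number formula for the $(2,2)$-extension $L_{n,d}/K_n^+$ with subfields $K_n$, $L_{n,d}^+$, $K_{n,d}$, feed in $Q_{K_n}=Q_{L_{n,d}}=Q_{K_{n,d}}=1$, the root-of-unity factor $1/2$, $h_2(K_n)=h_2(K_n^+)=1$ and $h_2(L_{n,d})=2^{n+r-1}$ from Theorem \ref{thm 2}, and solve for $h_2(K_{n,d})$. The divergence, and the genuine gap, is in how you obtain $h_2(L_{n,d}^+)=2$ for \emph{all} $n\ge 1$. The paper simply quotes \cite[Proposition 3]{ACZ2} for this equality. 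Your sandwich does not close: the lower bound (a level-$1$ Kuroda computation giving $h_2(L_{1,d}^+)=2$, plus surjectivity of the norm along the totally ramified tower, so that $h_2(L_{n,d}^+)$ is non-decreasing) together with the upper bound $h_2(L_{n,d}^+)\le 4$ from the Greenberg-type argument only yields $h_2(L_{n,d}^+)\in\{2,4\}$ for $n\ge 2$, and ``$\lambda^+=\mu^+=0$ stabilization'' is perfectly compatible with the value jumping to $4$ at some finite level and staying there. The fix you propose for this --- ``the honest level-$1$ computation'' --- cannot rule out the value $4$ at levels $n\ge 2$; you would need either a Fukuda-type two-consecutive-level computation showing $h_2(L_{1,d}^+)=h_2(L_{2,d}^+)$, or a structural argument forcing one of the two factors in the bound $\vert\ker\vert\cdot\vert A_n^+\vert\le 4$ to be trivial. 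Since the theorem hinges on the exact power of $2$ in $h_2(L_{n,d}^+)$ (an extra factor of $2$ would halve the claimed $h_2(K_{n,d})$), this is a real hole rather than a technicality.

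A secondary point: for cyclicity of the $2$-class group of $K_{n,d}$ the paper has a one-line argument that you could substitute for your McCall--Mizusawa route. Since $L_{n,d}/K_{n,d}$ is a ramified quadratic extension, the norm $\mathrm{Cl}(L_{n,d})\to \mathrm{Cl}(K_{n,d})$ is surjective on $2$-parts, so the $2$-rank of $\mathrm{Cl}(K_{n,d})$ is at most the $2$-rank of $\mathrm{Cl}(L_{n,d})$, which is $1$ by Theorem \ref{thm 2}. This avoids having to verify that \cite[Proposition 4]{mccall1995imaginary} yields rank $1$ (rather than $2$) in this congruence regime and the appeal to \cite{mizuzawa} for large $n$; your downward-propagation-of-cyclicity step is correct but unnecessary once the rank bound is available at every level.
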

 	
 	\begin{proof}We keep similar notations and proceed as    in the proof of   Theorem \ref{thm 10}. Note that by \cite[Proposition 3]{ACZ2}, we have $h_2(L_{n,d}^+)=2$. So as above 
 	we show that:
 		\begin{eqnarray}
 		h_2(L_{n,d})&=&\frac{Q_{L_{n,d}}}{Q_{K_n}Q_{K_{n,d}}}\cdot\frac{\mu_{L_{n,d}}}{\mu_{K_n}\mu_{K_{n,d}} }\cdot\frac{h_2(K_n)h_2(K_{n,d})h_2(L_{n,d}^+)}{h_2(K_n^+)^2}\cdot\nonumber 
 		\end{eqnarray}
 		Thus 	
 		\begin{eqnarray}
 		2^{n+r-1}&=&\frac{1}{1\cdot 1}\cdot\frac{2^n}{2^n\cdot 2 }\cdot\frac{1\cdot h_2(K_{n,d})\cdot 2}{1}\cdot\nonumber
 		\end{eqnarray}
 		Thus, $h_2(K_{n,d})=2^{n+r-1}$, for all $n$. Since $L_{n,d}/K_{n,d}$ is ramified, then $2$-rank$(\mathrm{Cl}(K_{n,d}))\leq$ $2$-rank$(\mathrm{Cl}(L_{n,d}))=1$
 		(Theorem \ref{thm 2}).  Which 
 		completes the proof.	
 	\end{proof}
 	
 	\begin{Remark}
 		One can easily deduce analogous corollaries as in the previous section.
 	\end{Remark}

 \end{document}